\newtheorem{theorem}{Theorem}[section]
\newtheorem{defi}[theorem]{Definition}
\newtheorem{prop}[theorem]{Proposition}
\newtheorem{exem}[theorem]{Example}
\newtheorem{obs}[theorem]{Remark}
\newtheorem{cor}[theorem]{Corollary}
\date{\today}
\begin{document}

\title{Representations of C*-algebras of row-countable graphs and unitary equivalence}

\author{Ben-Hur Eidt\footnote{This author is partially supported by Conselho Nacional de Desenvolvimento Cient\'{i}fico e Tecnol\'{o}gico - CNPq. } and Danilo Royer}
\maketitle

\begin{abstract}
    In this article we generalize the main results of \cite{A} and \cite{B}. More specifically, we show that there are branching systems (which induce representations of the graph $C^*(E)$) associated to each row-countable graph $E$. For row-countable graphs, we characterize the condition $(L)$ via branching systems. Moreover, we show that each permutative representation in Hilbert spaces operators is unitarily equivalent to one induced by a branching system, even the spaces being not separable. Furthermore, under some hypothesis on the graph, we show that each representation of the graph C*-algebra is permutative.
    \end{abstract}
\vspace{1 cm}
MSC 2010: 47L30\\
Keywords: Graph $C^*$-algebras, branching systems, representation theory, unitary equivalence.

\section*{Introduction}

The concept of a graph C*-algebra was first developed, in \cite{origemce}, by considering row-finite countable graphs (recall that a graph $E=(E^0,E^1,r,s)$ is countable if $E^0$ and $E^1$ are both countable and is row-finite if $s^{-1}(v)$ is finite for each vertex $v$), and have been extensively explored since then. 

Ideas related to branching systems have been studied in some areas like random walks, symbolic dynamics, scientific computing and operator theory, see \cite{B} for references.

In this paper we deal with branching systems in row-countable graphs, that is, graphs with the property that $s^{-1}(v)$ is at most countable for each vertex $v$. In \cite{A} the authors define a structure called branching system for graphs and show how to obtain a representation of $C^*(E)$ through a branching system of a graph $E$. Moreover, there is proved a result that ensures the existence of a branching system for all countable graphs. We prove this theorem for a larger class of graphs, the row-countable graphs. In \cite{B}, it is proved that each permutative representation $\varphi: C^*(E) \to B(H)$ (with $H$ separable) is unitarily equivalent to a representation arising from a branching system. We prove this result even $H$ being not separable. Moreover, in \cite{B}, the authors find a class of graphs where each representation $\varphi: C^*(E) \to B(H)$ (with $H$ separable) is permutative. We find a larger class where this result remains to be true.

The paper is organized as follows. In the first chapter we introduce branching systems and recall from \cite{A} how to obtain representations through this structure. After this we show how to obtain branching systems for row-countable graphs, and for graphs of this class, we characterize the condition $(L)$ via branching systems. In the second chapter, we consider a permutative representations $\varphi:  C^*(E) \to B(H)$ and show that this representations are unitarily equivalent to one induced by a branching system, even $H$ not being separable. In the last chapter, we prove that for graphs in a certain class, each representation is permutative. 

In this work, following \cite{ianr}, given an arbitrary graph $E$ we define the algebra $C^*(E)$ as being the universal $C^*$-algebra generated by  $\{P_v\}_{v \in E^0} \cup \{S_e\}_{e \in E^1}$ with the following  relations: $\{P_v\}_{v \in E^0}$ are mutually orthogonal projections,  $\{S_e\}_{e \in E^1}$ are partial isometries with orthogonal ranges and

\begin{itemize}
\item[(CK1)] $S_e^*S_e = P_{r(e)}$ $\forall e \in E^1$,
\item[(CK2)] $P_{s(e)}S_eS_e^* = S_eS_e^*$ $\forall e \in E^1$,
\item[(CK3)] $P_v = \sum\limits_{e \in s^{-1}(v)}S_eS_e^*$ provided that $v \in E^0$ is such that $0 < \# s^{-1}(v) < \infty$.
\end{itemize}

\section{Representations arising from branching systems}

In this section we define $E$-branching systems and recall from \cite{A} a theorem that shows how obtain a representation induced from a branching system. After that we prove that for row-countable graphs graphs there always exists a branching system. For graphs in this class, we also characterize the condition $(L)$ via representations induced from branching systems.

\begin{defi}\label{ESR}[2.1:\cite{A}]
Let $E = (E^0, E^1, r,s)$ be a graph, $(X, \mathcal{M}, \mu)$ a measure space and $\{R_e\}_{e \in E^1}$, $\{D_v\}_{v \in E^0}$ a collection of measurable subsets of $X$ such that:

\begin{enumerate}
\item $R_e \cap R_f \overset{\mu -a.e.}{=} \varnothing$, $\forall e,f \in E^1$ with $e \neq f$.
\item $D_v \cap D_w \overset{\mu -a.e.}{=} \varnothing$, $\forall v,w \in E^0$ with $v \neq w$.
\item $R_e \overset{\mu -a.e.}{\subseteq} D_{s(e)}$, $\forall e \in E^1$.
\item $D_v \overset{\mu -a.e.}{=} \bigcup \limits_{e \in s^{-1}(v)} R_e$, $\forall v \in E^0$ such that $0 < \# s^{-1}(v) < \infty$.
\item For each $e \in E^1$ there exist functions $f_e: D_{r(e)} \to R_{e}$, $f_e^{-1}: R_e \to D_{r(e)}$, both measurable and such that $f_e(D_{r(e)}) \overset{\mu -a.e.}{=} R_e$, $f_e^{-1} \circ f_e \overset{\mu -a.e.}{=} Id_{D_{r(e)}}$ e $f_e \circ f_e^{-1} \overset{\mu -a.e.}{=} Id_{R_e}$.
\item For each $e \in E^1$ there exist the Radon-Nikodym derivatives $\dfrac{d \mu \circ f_e}{d \mu}$ and $\dfrac{d \mu \circ f_e^{-1}}{d \mu}$, denoted by $\Phi_{f_e}$ and $\Phi_{f_e^{-1}}$, respectively . Furthermore, $$( \Phi_{f_e^{-1}} \circ f_e ) . \Phi_{f_e} = 1 = ( \Phi_{f_e} \circ f_e^{-1} ) . \Phi_{f_e^{-1}} \,\ \mu -a.e.$$
\end{enumerate}

The measure space $(X, \mathcal{M}, \mu)$, with the collections $\{R_e\}_{e \in E^1}$, $\{D_v\}_{v \in E^0}$ and the functions
 $f_e$, $f_e^{-1}$, $\Phi_{f_e}$,  $\Phi_{f_e^{-1}}$, satisfying the items above is called an $E$-\textbf{branching system}.
\end{defi}

In the measure spaces $D_{r(e)}$ and $R_e$ we consider the $\sigma$-algebras induced by $X$, moreover, since the Radon-Nikodym derivative is a positive function, it follows from item $6$ that $\Phi_{f_e}, \Phi_{f_e^{-1}} > 0$ $\mu- a.e$. Below we show a sufficient condition that ensures the equality of item 6.

\begin{prop}\label{mq0} Let $(X,\mathcal{M},\mu)$ a measure space and suppose that the items $1$ until $5$ from definition \ref{ESR} are satisfied and exist $\Phi_{f_e}$ and $\Phi_{f_e}^{-1}$. If for each $e \in E^1$ the measures $\mu: D_{r(e)} \to [0,\infty]$ and $\mu: R_e \to [0,\infty]$ are semi-finite then $(\Phi_{f_e^{-1}} \circ f_e) . \Phi_{f_e} = 1 = ( \Phi_{f_e} \circ f_e^{-1} ) . \Phi_{f_e^{-1}}$ \,\ $\mu- a.e$. In particular, $(X,\mathcal{M},\mu)$ is a $E$-branching system.
\end{prop}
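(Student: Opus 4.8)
The plan is to fix an edge $e \in E^1$ and establish the two identities of item 6 separately, the second being obtained from the first by interchanging the roles of $f_e$ and $f_e^{-1}$ (and of $D_{r(e)}$ and $R_e$): items 5, the existence of $\Phi_{f_e},\Phi_{f_e^{-1}}$, and the semi-finiteness of $\mu$ on both sets are all symmetric under this interchange, and $(f_e^{-1})^{-1}=f_e$ $\mu$-a.e. Once item 6 holds for every edge, items 1--6 are all satisfied and the stated data is by definition an $E$-branching system, which is the ``in particular''.

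So I focus on showing $(\Phi_{f_e^{-1}}\circ f_e)\cdot\Phi_{f_e}=1$ $\mu$-a.e.\ on $D_{r(e)}$. By definition of the Radon--Nikodym derivatives, $\mu(f_e(A))=\int_A\Phi_{f_e}\,d\mu$ for measurable $A\subseteq D_{r(e)}$ and $\mu(f_e^{-1}(B))=\int_B\Phi_{f_e^{-1}}\,d\mu$ for measurable $B\subseteq R_e$. Since $f_e$ and $f_e^{-1}$ are mutually inverse measurable bijections up to $\mu$-null sets (item 5), the standard change-of-variables formula $\int\psi\,d(T_\ast\nu)=\int(\psi\circ T)\,d\nu$ (valid for any measurable map $T$, any measure $\nu$, and any measurable $\psi\ge 0$, with no $\sigma$-finiteness needed) converts these two relations into
\[
\int_{D_{r(e)}}\psi\cdot\Phi_{f_e}\,d\mu=\int_{R_e}(\psi\circ f_e^{-1})\,d\mu
\quad\text{and}\quad
\int_{R_e}\phi\cdot\Phi_{f_e^{-1}}\,d\mu=\int_{D_{r(e)}}(\phi\circ f_e)\,d\mu,
\]
for all measurable $\psi:D_{r(e)}\to[0,\infty]$ and $\phi:R_e\to[0,\infty]$. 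Taking $\phi=\psi\circ f_e^{-1}$ in the second identity and using $f_e^{-1}\circ f_e=\mathrm{Id}_{D_{r(e)}}$ $\mu$-a.e.\ gives $\int_{R_e}(\psi\circ f_e^{-1})\cdot\Phi_{f_e^{-1}}\,d\mu=\int_{D_{r(e)}}\psi\,d\mu$; then replacing $\psi$ by $\psi\cdot(\Phi_{f_e^{-1}}\circ f_e)$ in the first identity and using $f_e\circ f_e^{-1}=\mathrm{Id}_{R_e}$ $\mu$-a.e.\ to collapse the right-hand side, the two combine to
\[
\int_A(\Phi_{f_e^{-1}}\circ f_e)\cdot\Phi_{f_e}\,d\mu=\mu(A)\qquad\text{for every measurable }A\subseteq D_{r(e)}.
\]

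It remains to pass from this equality of integrals to the pointwise equality $(\Phi_{f_e^{-1}}\circ f_e)\cdot\Phi_{f_e}=1$ $\mu$-a.e., and this is the only place where semi-finiteness enters — indeed it is genuinely needed, since on a measurable set that is an atom of infinite measure any strictly positive value of the density reproduces the measure. Writing $w:=(\Phi_{f_e^{-1}}\circ f_e)\cdot\Phi_{f_e}\ge 0$: if $\mu(\{w<1\})>0$ then for some $n$ the level set $A_n:=\{w\le 1-\tfrac1n\}$ has positive measure, and semi-finiteness lets me choose $B\subseteq A_n$ with $0<\mu(B)<\infty$; then $\mu(B)=\int_B w\,d\mu\le(1-\tfrac1n)\mu(B)<\mu(B)$, a contradiction, so $\mu(\{w<1\})=0$. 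The same argument applied to the level sets $\{w\ge 1+\tfrac1n\}$ shows $\mu(\{w>1\})=0$, whence $w=1$ $\mu$-a.e., as required. I do not anticipate a serious obstacle beyond this: the main thing demanding care is keeping track of the ``$\mu$-a.e.'' qualifiers when composing the partial bijections $f_e,f_e^{-1}$ and when substituting test functions, but at each step only a finite union of null sets is involved and this causes no difficulty.
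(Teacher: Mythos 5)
Your proposal is correct and follows essentially the same route as the paper: both derive $\int_A(\Phi_{f_e^{-1}}\circ f_e)\cdot\Phi_{f_e}\,d\mu=\mu(A)$ for every measurable $A\subseteq D_{r(e)}$ by combining the Radon--Nikodym densities with the change-of-variables formula for $\mu\circ f_e$ and $\mu\circ f_e^{-1}$, and then invoke semi-finiteness to upgrade the integral identity to $=1$ $\mu$-a.e. Your only divergence is that you spell out the semi-finiteness argument (via level sets and subsets of finite positive measure), which the paper leaves as a one-line assertion.
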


\begin{proof}
For each measurable set $E$,
$$\int\limits_{E} ( \Phi_{f_e^{-1}} \circ f_e ) . \Phi_{f_e} \,\ d\mu = \int\limits_{D_{r(e)}} \chi_{E} . ( \Phi_{f_e^{-1}} \circ f_e ) . \Phi_{f_e} \,\ d\mu =  \int\limits_{D_{r(e)}} \chi_{E} . ( \Phi_{f_e^{-1}} \circ f_e ) \,\ d(\mu \circ f_e) =$$ $$= \int\limits_{R_e} ( \chi_{E} \circ f_e^{-1}) . \Phi_{f_e^{-1}} \,\ d\mu = \int\limits_{R_e} ( \chi_{E} \circ f_e^{-1} ) \,\ d(\mu \circ f_e^{-1}) = \int\limits_{D_{r(e)}} \chi_{E} \,\ d\mu = \int\limits_{E} 1 \,\ d\mu.$$

Since $\mu$ is semi-finite then $(\Phi_{f_e^{-1}} \circ f_e) . \Phi_{f_e} \overset{\mu -a.e.}{=} 1$. The same argument shows that $( \Phi_{f_e} \circ f_e^{-1} ) . \Phi_{f_e^{-1}} = 1$ \,\ $\mu- a.e$. 
\end{proof}

In the article \cite{A}, two questions were developed, the first is the connection between a branching system and representations of $C^*(E)$. The main result in this way is the following theorem.

\begin{theorem}\label{rep}[2.2: \cite{A}]
Let $E = (E^0, E^1, r, s)$ be a graph and $(X, \mathcal{M}, \mu)$ a $E$-branching system. Then exists a representation $\pi: C^*(E) \to B(L^2(X, \mathcal{M}, \mu))$ such that
$$\pi(S_e)(\phi) = \chi_{R_e} .\Phi_{f_e^{-1}}^{\frac{1}{2}} . (\phi \circ f_e^{-1}) \,\ \,\ \,\  \textrm{e} \,\ \,\ \,\ \pi(P_v)(\phi) = \chi_{D_v} . \phi$$
\end{theorem}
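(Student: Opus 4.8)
The plan is to verify the Cuntz–Krieger relations directly by defining the putative operators and checking they satisfy the universal relations, then invoking the universal property of $C^*(E)$.

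The plan is to define the operators $\pi(S_e)$ and $\pi(P_v)$ on $H = L^2(X,\mathcal{M},\mu)$ by the two displayed formulas, to verify directly that the family $\{\pi(P_v)\}_{v\in E^0}\cup\{\pi(S_e)\}_{e\in E^1}$ satisfies the defining relations of $C^*(E)$ (mutually orthogonal projections; partial isometries with orthogonal ranges; (CK1)--(CK3)), and then to invoke the universal property of $C^*(E)$ to obtain the $*$-homomorphism $\pi$. Thus the whole proof is a sequence of measure-theoretic computations.

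First I would check the formulas make sense. The operator $\pi(P_v)$ is multiplication by $\chi_{D_v}$, hence a self-adjoint projection, and items~1--2 of Definition~\ref{ESR} say distinct $D_v$ are $\mu$-a.e.\ disjoint, so the $\pi(P_v)$ are mutually orthogonal. For $\pi(S_e)$ I would first argue well-definedness on $L^2$: since $f_e,f_e^{-1}$ are mutually inverse measurable bijections between $D_{r(e)}$ and $R_e$ (item~5) with strictly positive Radon--Nikodym derivatives $\Phi_{f_e},\Phi_{f_e^{-1}}$ (item~6), null sets pull back to null sets and so $\phi\mapsto\chi_{R_e}\Phi_{f_e^{-1}}^{1/2}(\phi\circ f_e^{-1})$ is independent of the representative of $\phi$. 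A change of variables with $\Phi_{f_e^{-1}}=\frac{d\,\mu\circ f_e^{-1}}{d\mu}$ then gives
$$\|\pi(S_e)\phi\|^2=\int_{R_e}\Phi_{f_e^{-1}}\,|\phi\circ f_e^{-1}|^2\,d\mu=\int_{R_e}|\phi\circ f_e^{-1}|^2\,d(\mu\circ f_e^{-1})=\int_{D_{r(e)}}|\phi|^2\,d\mu ,$$
so $\pi(S_e)$ is bounded with norm $\le 1$ (it is an isometry on $\pi(P_{r(e)})H$ and $0$ on its complement). As the range of $\pi(S_e)$ is contained in the set of functions supported on $R_e$, and the $R_e$ are pairwise $\mu$-a.e.\ disjoint (item~1), the $\pi(S_e)$ have mutually orthogonal ranges.

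Next I would compute the adjoint: pairing $\pi(S_e)\phi$ with $\psi\in H$, changing variables $x=f_e(y)$, and using the identity $(\Phi_{f_e^{-1}}\circ f_e)\,\Phi_{f_e}=1$ from item~6, one gets $\pi(S_e)^*\psi=\chi_{D_{r(e)}}\,\Phi_{f_e}^{1/2}\,(\psi\circ f_e)$. From here the Cuntz--Krieger relations are short substitutions using $\chi_{R_e}\circ f_e=\chi_{D_{r(e)}}$, $\Phi_{f_e^{-1}}^{1/2}\circ f_e=\Phi_{f_e}^{-1/2}$, and $f_e^{-1}\circ f_e=\mathrm{Id}$ (all $\mu$-a.e.): composing yields that $\pi(S_e)^*\pi(S_e)$ is multiplication by $\chi_{D_{r(e)}}$, i.e.\ $=\pi(P_{r(e)})$, which is (CK1); the symmetric computation shows $\pi(S_e)\pi(S_e)^*$ is multiplication by $\chi_{R_e}$, and since $R_e\subseteq D_{s(e)}$ $\mu$-a.e.\ (item~3), multiplying by $\chi_{D_{s(e)}}$ fixes it, which is (CK2); and for $v$ with $0<\#s^{-1}(v)<\infty$, item~4 together with the $\mu$-a.e.\ disjointness of the $R_e$ gives $\sum_{e\in s^{-1}(v)}\pi(S_e)\pi(S_e)^*=\chi_{\bigcup_{e\in s^{-1}(v)}R_e}=\chi_{D_v}=\pi(P_v)$, which is (CK3). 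The universal property of $C^*(E)$ then yields the representation $\pi$ with the stated action on generators.

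The main obstacle is bookkeeping rather than depth: one must keep every ``$\mu$-a.e.'' qualification under control through the change-of-variables steps, integrate against the correct pushforward ($\mu\circ f_e$ or $\mu\circ f_e^{-1}$) each time, and simplify each composite Radon--Nikodym factor using item~6 on exactly the right set ($D_{r(e)}$ or $R_e$). The identity $(\Phi_{f_e^{-1}}\circ f_e)\Phi_{f_e}=1$ is invoked repeatedly and is precisely the place where such an argument is most prone to error.
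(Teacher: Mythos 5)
Your proposal is correct, but note that the paper does not prove this statement at all: it is quoted as Theorem 2.2 of \cite{A} and used as a black box. Your argument---defining $\pi(P_v)$ and $\pi(S_e)$ by the displayed formulas, computing $\pi(S_e)^*\psi=\chi_{D_{r(e)}}\Phi_{f_e}^{1/2}(\psi\circ f_e)$, verifying (CK1)--(CK3) via the change-of-variables identities and item~6 of Definition \ref{ESR}, and invoking universality---is exactly the standard proof given in \cite{A}, and the measure-theoretic steps you outline (isometry on $\chi_{D_{r(e)}}L^2$, ranges supported in the a.e.\ disjoint $R_e$, the simplifications $(\Phi_{f_e^{-1}}\circ f_e)\Phi_{f_e}=1$) are carried out correctly.
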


The second question is: given a graph $E$, there exists always an $E$-branching system? There is shown that if $E$ is countable then the answer is positive, that is, there always exists an $E$-branching system (see \cite[Theorem 3.1]{A}). However, the hypothesis can be weakened and that is the main goal of this section. For this, we need some preliminary results.

Let $E = (E^0, E^1, r, s)$ be a graph. Let $(0,1]$ with the Borel $\sigma$-algebra and the Lebesgue measure and let
and $\Lambda := E^0 \cup E^1$. Define for each $A \subseteq (0,1] \times \Lambda$ and for each $\lambda \in \Lambda$ the set $A^{(\lambda)} := \{t \in (0,1] \,\ | \,\ (t, \lambda) \in A\}$ and define the collection $\mathcal{M} = \{A \subseteq (0,1] \times \Lambda \,\ | \,\ A^{(\lambda)} \textrm{ is Borel measurable } \forall \lambda \in \Lambda\}$.
Moreover, define the function $\mu: \mathcal{M} \to [0,\infty]$ given by $\mu(A) = \sum\limits_{\lambda \in \Lambda} m(A^{(\lambda)})$ where $m$ is the Lebesgue measure.

\begin{prop}
$( (0,1] \times \Lambda, \mathcal{M}, \mu )$ is a measure space.
\end{prop}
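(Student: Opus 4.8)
The plan is to verify directly the three defining properties of a measure: that $\mu(\varnothing) = 0$, that $\mu$ is non-negative (immediate, since it is a sum of Lebesgue measures), and that $\mu$ is countably additive on $\mathcal{M}$. First, though, I would check that $\mathcal{M}$ is genuinely a $\sigma$-algebra on $(0,1]\times\Lambda$: the empty set has empty slices, which are Borel; complements behave well because $(A^c)^{(\lambda)} = ((0,1]\setminus A^{(\lambda)})$ is Borel whenever $A^{(\lambda)}$ is; and countable unions behave well because $\bigl(\bigcup_n A_n\bigr)^{(\lambda)} = \bigcup_n A_n^{(\lambda)}$ is a countable union of Borel sets, hence Borel. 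This last identity is the workhorse: taking slices commutes with all set operations indexed by $\lambda$, which is what makes everything go through.

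For $\mu(\varnothing) = 0$: each slice $\varnothing^{(\lambda)} = \varnothing$, so $\mu(\varnothing) = \sum_{\lambda\in\Lambda} m(\varnothing) = 0$ (even though $\Lambda$ may be uncountable, every summand is $0$). For countable additivity, let $\{A_n\}_{n\in\mathbb{N}}$ be a pairwise disjoint family in $\mathcal{M}$. Then for each fixed $\lambda$, the slices $\{A_n^{(\lambda)}\}_n$ are pairwise disjoint Borel subsets of $(0,1]$, so by countable additivity of $m$ we get $m\bigl(\bigl(\bigcup_n A_n\bigr)^{(\lambda)}\bigr) = m\bigl(\bigcup_n A_n^{(\lambda)}\bigr) = \sum_n m(A_n^{(\lambda)})$. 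Summing over $\lambda\in\Lambda$ and using the Tonelli-type interchange of two sums of non-negative terms (which is unconditionally valid, regardless of cardinality), $\mu\bigl(\bigcup_n A_n\bigr) = \sum_{\lambda}\sum_n m(A_n^{(\lambda)}) = \sum_n\sum_{\lambda} m(A_n^{(\lambda)}) = \sum_n \mu(A_n)$.

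The only genuinely delicate point — though still elementary — is the legitimacy of swapping the two sums $\sum_{\lambda\in\Lambda}$ and $\sum_{n\in\mathbb{N}}$ when $\Lambda$ is uncountable. I would note that a sum of non-negative extended-real numbers over an arbitrary index set is defined as the supremum of finite subsums, and that for double sums of non-negative terms this supremum is the same whether one sums the rows first or the columns first (this is the unsigned Fubini/Tonelli theorem for counting measures, and needs no hypothesis on the cardinality of the index sets). I expect this interchange to be the main obstacle only in the sense that it is the one place where one must be careful not to invoke countable-index reasoning; everything else is a routine slice-by-slice reduction to the standard fact that $m$ is a measure on the Borel sets of $(0,1]$. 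I would therefore state these verifications compactly and conclude that $((0,1]\times\Lambda,\mathcal{M},\mu)$ is a measure space.
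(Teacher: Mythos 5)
Your proof is correct, and it reaches the conclusion by a genuinely different (and more uniform) route than the paper's. Where you invoke, once and for all, the unsigned interchange $\sum_{\lambda\in\Lambda}\sum_{n\in\mathbb{N}} a_{n,\lambda}=\sum_{n\in\mathbb{N}}\sum_{\lambda\in\Lambda} a_{n,\lambda}$ for non-negative terms over an arbitrary index set (justified via suprema of finite subsums), the paper avoids any uncountable-index interchange by a case split on $A=\bigcup_n A_n$: if $m(A^{(\lambda)})>0$ for uncountably many $\lambda$, it argues that some single $A_{n_0}$ must already have uncountably many slices of positive measure, so that $\mu(A)$ and $\sum_n\mu(A_n)$ are both infinite; if instead only countably many slices of $A$ have positive measure, then (since $A_n^{(\lambda)}\subseteq A^{(\lambda)}$) every double sum involved has only countably many nonzero columns and the interchange reduces to the classical Tonelli theorem for countable double series of non-negative terms. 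Your single general lemma subsumes both cases and shortens the argument; its only cost is that the arbitrary-index interchange must be justified exactly as you indicate, by the supremum-of-finite-subsums characterization of unordered sums, and not by the measure-theoretic Tonelli theorem (counting measure on an uncountable $\Lambda$ is not $\sigma$-finite), whereas the paper's case analysis buys the convenience of never leaving the countable setting. Your explicit verification that $\mathcal{M}$ is a $\sigma$-algebra and that $\mu(\varnothing)=0$ is also correct; the paper leaves the former to the reader.
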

\begin{proof}
The reader can check that $\mathcal{M}$ is a $\sigma$-algebra. Let $\{A_n\}_{n \in \mathbb{N}} \subseteq \mathcal{M}$ be a family of disjoints subsets and $A = \bigcup\limits_{n \in \mathbb{N}} A_n$. We will consider two cases.

\textit{Case 1}: Suppose that $m(A^{(\lambda)}) > 0$ for a uncountable number of elements $\lambda \in \Lambda$. Then  $$\mu\left(\bigcup\limits_{n \in \mathbb{N}} A_n\right) = \mu(A) = \sum\limits_{\lambda \in \Lambda} m(A^{(\lambda)}) = \infty.$$
On the other hand, there exist $n_0 \in \mathbb{N}$ with the property that there exists an uncountable number of elements, $\lambda \in \Lambda$, such that $m(A_{n_0}^{(\lambda)}) > 0$. In fact, otherwise,  for each $\mathbb{N}$, there exist at most a countable number of elements $\lambda \in \Lambda$ such that $m(A_{n}^{(\lambda)}) > 0$ and then there exists a countable number of elements $\lambda \in \Lambda$ such that $m \left(\bigcup\limits_{n \in \mathbb{N}} A_n^{(\lambda)}\right) > 0$.  Since $A^{(\lambda)} = \bigcup\limits_{n \in \mathbb{N}} A_n^{(\lambda)}$ we conclude that $m(A^{(\lambda)}) > 0$ for a countable number of elements $\lambda \in \Lambda$, and this is a contradiction with the hypothesis assumed in the \textit{Case 1}. Therefore,
 $$\mu(A_{n_0}) = \sum\limits_{\lambda \in \Lambda} m(A_{n_0}^{(\lambda)}) = \infty$$ and as $\sum\limits_{n \in \mathbb{N}} \mu(A_n) \geq \mu(A_{n_0}) = \infty$ then $$\sum\limits_{n \in \mathbb{N}} \mu(A_n)  = \mu\left(\bigcup\limits_{n \in \mathbb{N}} A_n\right).$$

\textit{Case 2}: Suppose that $m(A^{(\lambda)}) > 0$ for, at most, a countable number of elements $\lambda \in \Lambda$.
In this case,
$$\mu\left(\bigcup\limits_{n \in \mathbb{N}} A_n\right) = \sum\limits_{\lambda \in \Lambda} m\left( \left(\bigcup\limits_{n \in \mathbb{N}} A_n\right)^{(\lambda)} \right) = \sum\limits_{\lambda \in \Lambda} m\left(\bigcup\limits_{n \in \mathbb{N}} A_n^{(\lambda)}\right)= $$ $$= \sum\limits_{\lambda \in \Lambda} \sum\limits_{n \in \mathbb{N}} m(A_n^{(\lambda)}) = \sum\limits_{n \in \mathbb{N}}\sum\limits_{\lambda \in \Lambda}m(A_n^{(\lambda)}) = \sum\limits_{n \in \mathbb{N}} \mu(A_n).$$
    This finishes the proof.\end{proof}

\begin{defi}
 Let $E = (E^0, E^1, r,s)$ be a graph. We say that $E$ is row-countable if $s^{-1}(v)$ is countable for every $v \in E^0$.
\end{defi}

We are now ready to prove the main theorem of this section.
\begin{theorem}\label{princ}
Let $E = (E^0, E^1, r,s)$ be a row-countable graph. Then there exists an $E$-branching system.
\end{theorem}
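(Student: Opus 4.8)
The plan is to build the required $E$-branching system inside the measure space $((0,1]\times\Lambda,\mathcal M,\mu)$ constructed above, by assigning to each vertex $v\in E^0$ a "fresh" copy $(0,1]\times\{v\}$ of the unit interval, and then subdividing the copy attached to the source of each edge so as to accommodate the ranges $R_e$. Concretely, first I would set $D_v^{(0)}:=(0,1]\times\{v\}$ for every $v$; these are pairwise disjoint measurable sets, so items (1)--(2) of Definition \ref{ESR} will be immediate once the $R_e$ are chosen inside the appropriate slices. For a fixed vertex $v$ with $1\le\#s^{-1}(v)\le\infty$, enumerate $s^{-1}(v)=\{e_1,e_2,\dots\}$ (possible since $E$ is row-countable) and partition $(0,1]$ into consecutive half-open intervals $I_1,I_2,\dots$ of positive length summing to $1$ (e.g. $I_n=(2^{-n},2^{-n+1}]$, or a finite even partition when $s^{-1}(v)$ is finite so that item (4) holds exactly rather than only up to null sets); then declare $R_{e_n}:=I_n\times\{s(e_n)\}\subseteq D_{s(e_n)}^{(0)}$. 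Vertices $v$ that are sinks (i.e. $s^{-1}(v)=\varnothing$) receive no subdivision and contribute no $R_e$. This makes items (1)--(4) hold, with (4) an exact equality.

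The second step is to define, for each $e\in E^1$, the measurable bijection $f_e:D_{r(e)}\to R_e$ together with its inverse. Since $D_{r(e)}$ is (a copy of) $(0,1]$ and $R_e$ is (a copy of) a subinterval $I\subseteq(0,1]$, I would take $f_e$ to be the obvious affine rescaling $t\mapsto a+ct$ (mapping $(0,1]$ onto $I$), composed with the slice-relabelling $D_{r(e)}=(0,1]\times\{r(e)\}\leftrightarrow(0,1]$ and $I\times\{s(e)\}\leftrightarrow I\subseteq(0,1]$. Such an affine map is a measurable bijection with measurable inverse and $f_e(D_{r(e)})=R_e$ exactly, giving item (5). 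For item (6), the Radon--Nikodym derivative of Lebesgue measure under an affine map of slope $c$ is the constant function $c$ (respectively $1/c$ for the inverse), which is strictly positive; so $\Phi_{f_e}\equiv c$ and $\Phi_{f_e^{-1}}\equiv 1/c$ exist and satisfy $(\Phi_{f_e^{-1}}\circ f_e)\cdot\Phi_{f_e}=\tfrac1c\cdot c=1$ and likewise on the other side. Alternatively one can simply invoke Proposition \ref{mq0}, since $\mu$ restricted to any of these sets is $\sigma$-finite (each $D_v$ and each $R_e$ has finite measure), hence semi-finite; this even removes the need to compute the derivatives explicitly beyond checking they exist. Finally, since $m$ is the Lebesgue measure and $\mu$ counts Lebesgue measures slice-wise, one checks $\mu\circ f_e\ll\mu$ and $\mu\circ f_e^{-1}\ll\mu$ on the relevant sets, so the derivatives do exist.

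The only genuine subtlety, and the step I expect to need the most care, is the exactness of item (4) versus the countable (rather than finite) case of item (3): when $\#s^{-1}(v)=\infty$ there is no Cuntz--Krieger relation (CK3) to satisfy, so an infinite partition of $(0,1]$ into intervals of positive length is fine and we do not need $\bigcup_{e\in s^{-1}(v)}R_e$ to exhaust $D_v$ — but we must make sure the $R_e$ are genuinely disjoint (not merely a.e.) and measurable in $\mathcal M$, which holds because each is of the form (interval)$\times\{$vertex$\}$. When $\#s^{-1}(v)$ is finite and positive we instead partition $(0,1]$ into exactly $\#s^{-1}(v)$ intervals so that $D_v=\bigsqcup_{e\in s^{-1}(v)}R_e$ on the nose. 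One small bookkeeping point: a vertex $v$ may be both a range vertex $r(e)$ for some edges and a source vertex; this causes no conflict because the domain side of every $f_e$ is the \emph{full} slice $D_{r(e)}^{(0)}=(0,1]\times\{r(e)\}$, which is exactly the set $D_{r(e)}$ we assigned, whether or not $r(e)$ also emits edges. Assembling these observations verifies all six items of Definition \ref{ESR}, so $((0,1]\times\Lambda,\mathcal M,\mu)$ with the above data is an $E$-branching system, completing the proof. $\blacksquare$
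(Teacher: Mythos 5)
Your construction is correct, but it is the mirror image of the paper's. The paper assigns a full copy of $(0,1]$ to each \emph{edge}, setting $R_e=(0,1]\times\{e\}$, keeps $D_v=(0,1]\times\{v\}$ only for sinks, and for emitters defines $D_v=\bigcup_{e\in s^{-1}(v)}R_e=(0,1]\times s^{-1}(v)$; consequently each $f_e:D_{r(e)}\to R_e$ must be built piecewise, slice by slice, by choosing homeomorphisms $\varphi_j:(0,1]\to I_j$ that compress each slice of $D_{r(e)}$ into a subinterval of the single slice $R_e$, and absolute continuity $\mu\circ f_e\ll\mu$ is checked via the fact that diffeomorphisms carry Lebesgue-null sets to null sets, with the endpoint $1$ handled separately. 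You instead assign a full copy of $(0,1]$ to each \emph{vertex} and carve the ranges $R_e$ as subintervals of the source slice (an exact finite partition when $0<\#s^{-1}(v)<\infty$, a countable one otherwise, which is where row-countability enters in both arguments), so that each $f_e$ is a single affine rescaling of a whole slice onto a half-open subinterval; this makes items (5)--(6) essentially immediate, since the Radon--Nikodym derivatives are the constants $c$ and $1/c$ (or one invokes Proposition \ref{mq0}, as you note, all the relevant restricted measures being finite). What each approach buys: yours avoids piecewise-defined maps, the separate treatment of endpoints, and the appeal to the diffeomorphism/null-set lemma, and it gives $\mu(D_v)=1$ for every vertex; the paper's keeps every $R_e$ of full measure $1$ and realizes $D_v$ literally as the disjoint union of the edge slices. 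Both yield $\mu(D_v)>0$, so the corollaries that follow (e.g.\ $P_v\neq 0$, $S_e\neq 0$) go through unchanged with your branching system.
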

\begin{proof}
Let $((0,1] \times \Lambda ,\mathcal{M}, \mu)$ be the measure space as in the previous proposition. Note that if $I \subseteq (0,1]$ is Borel-measurable and $(\lambda_n)_{n \in \mathbb{N}} \subseteq \Lambda$ then $I \times \{\lambda_1, \lambda_2, \ldots \} \in \mathcal{M}.$
For each $e \in E^1$ define $R_e = (0,1] \times \{e\}$, let $W = \{v \in E^0\,\ | \,\ v \textrm{ is a sink }\}$. For each $v \in W$ define $D_v = (0,1] \times \{v\}$ and for each $v \in E^0 - W$ define $$D_v := \bigcup\limits_{e \in s^{-1}(v)} R_e = \bigcup\limits_{e \in s^{-1}(v)} (0,1] \times \{e\} = (0,1] \times  s^{-1}(v).$$
Now we verify the first four conditions of Definition \ref{ESR}, and after that we define the maps $f_e:D_{r(e)}\rightarrow R_e$.

For $e,d \in E^1$ with $d \neq e$ it holds that $$R_d \cap R_e = (0,1] \times \{d\} \cap (0,1] \times \{e\} = (0,1] \times \{e\}\cap\{d\} = \emptyset.$$ 

Fore $v,u \in E^0$ with $v \neq u$ there are three cases:
\begin{itemize}
\item If $u,v \in W$ then $$D_v \cap D_u = (0,1] \times \{v\} \cap (0,1] \times \{u\} = (0,1] \times \{v\}\cap\{u\} = \emptyset.$$
\item If $u \in W$ and $v \notin W$ then $$D_v \cap D_u = (0,1] \times \{v\} \cap (0,1] \times s^{-1}(u) = (0,1] \times \{v\} \cap s^{-1}(u) = \emptyset.$$
\item If $u \notin W$ and $v \notin W$ then $$D_v \cap D_u = (0,1] \times s^{-1}(v) \cap (0,1] \times s^{-1}(u) = (0,1] \times s^{-1}(v) \cap s^{-1}(u) = \emptyset.$$
\end{itemize}
Moreover, given $e \in E^1$, since $s(e)$ is not a sink we have $R_e \subseteq \bigcup\limits_{d \in s^{-1}(s(e))} R_d = D_{s(e)}$. If $v \in E^0$ is such that $0 < \#s^{-1}(v) < \infty$, $v$ is not a sink and then $D_v = \bigcup\limits_{e \in s^{-1}(v)}R_e$.

Now we define the functions $f_d: D_{r(d)} \to R_d$ for each $d \in E^1$. 
Fix $d \in E^1$.
\begin{itemize}
\item \textit{Case 1: $r(d) \in W$}. In this case $D_{r(d)} = (0,1] \times \{r(d)\}$ e $R_d = (0,1] \times \{d\}$. Define
$$f_d: (0,1] \times \{r(d)\} \to (0,1] \times \{d\}$$
$$(t,r(d)) \mapsto (t,d).$$
Clearly $f_d$ is a bijection and, if $f_d^{-1}$ is the inverse of $f_d$ then $f_d \circ f_d^{-1} = Id_{R_d}$, $f_d^{-1} \circ f_d = Id_{D_{r(d)}}$ and $f(D_{r(d)}) = R_d$. Note that $f_d$ are measurable: let $B \in \mathcal{M}_{R_d}$ then $B = A \cap R_d = A \cap (0,1] \times \{d\}$ for some $A \in \mathcal{M}$ and then $B = A^{(d)} \times \{d\}$. Therefore
$$f_d^{-1}(B) = f_d^{-1} ( A^{(d)} \times \{d\} ) = A^{(d)} \times \{r(d)\} \in \mathcal{M}.$$
As $f_d^{-1}(B) \subseteq D_{r(d)}$ then $f_d^{-1}(B) \in \mathcal{M}_{D_{r(d)}}$ and then $f_d$ is measurable. The same argument shows that $f_d^{-1}$ is measurable.

Moreover, as $\mu(D_{r(d)}) = \mu( (0,1] \times \{r(d)\} ) = m(0,1] = 1 < \infty$ and $\mu \circ f_d (D_{r(d)}) = \mu(R_d) = \mu( (0,1] \times \{d\}) = m(0,1] = 1 < \infty$ then the measures are $\sigma$-finite. If $B \in M_{D_{r(d)}}$ is such that $\mu (B) = 0$ then $B = A \cap (0,1] \times \{r(d)\} = A^{(r(d))} \times \{r(d)\}$ for some $A \in \mathcal{M}$ and then $0 = \mu(B) = m(A^{(r(d)})$. Therefore
$$\mu \circ f_d (B) = \mu \circ f_d ( A^{(r(d))} \times \{r(d)\}) = \mu (A^{(r(d))} \times \{d\}) = m(A^{(r(d))}) = 0$$
and then $\mu \circ f_d \ll \mu$. By the Radon-Nikodym Theorem there exists $\dfrac{d \mu \circ f_d}{d \mu} =: \Phi_{f_d}$.  The same argument can be used for $\Phi_{f_d^{-1}}$. By Proposition \ref{mq0} we conclude that the sixth condition of Definition \ref{ESR} is true.

\item \textit{Case 2: $0 < \#s^{-1}(r(d)) < \infty$}.

Write $s^{-1}(r(d)) = \{e_1, \ldots, e_N\}$ for some $N \in \mathbb{N}$. Note that $(0,1] = \bigcup\limits_
{j = 1}^{N} I_j$ where, for each $j = 1,\ldots, N$  $I_j = (\frac{j-1}{N}, \frac{j}{N}]$. 
Fix $j \in \{1,\ldots, N\}$. Let $\varphi_j: (0,1] \to I_j$ be a homeomorphism such that $\varphi_{j |(0,1)} : (0,1) \to (\frac{j-1}{N}, \frac{j}{N})$ is a diffeomorphism (for example, the linear increasing homeomorphism) and define 
$$f_{d,j}: (0,1] \times \{e_j\} \to I_j \times \{d\}$$ 
$$(t,e_j) \mapsto (\varphi_j(t),d)$$

Notice that $f_{d,j}$ is a bijection with inverse given by $f_{d,j}^{-1}(t,d) = (\varphi_j^{-1}(t), e_j)$ for each $t \in I_j$. Moreover, $f_{d,j}$ is measurable (this follows from a similar argument that was used in Case 1 and from the fact that $\varphi_j$ is continuous). Define

$$f_d: (0,1] \times \{e_1,\ldots,e_N\} \to  (0,1] \times \{d\}$$
$$(t,e_j) \mapsto f_{d,j}(t,e_j)$$

for each $j = 1, \ldots, N$. Note that $f_d$ is bijetction; as $(0,1] \times \{e_1, \ldots, e_N\} = (0,1] \times \{e_1\} \cup \ldots \cup (0,1] \times \{e_n\}$ and $(f_d )_{|(0,1] \times \{e_j\}} = f_{d,j}$ for each $j = 1, \ldots, N$  the function $f_d$ is measurable. The same argument ensure that $f_d^{-1}$ is measurable. 

As $D_{r(d)} = \bigcup\limits_{i = 1}^{N} \left( (0,1] \times\{e_i\} \right)$, then $\mu \left( (0,1] \times \{e_i\} \right) = 1$ for each $i=1,\ldots,N$ and  $\mu \circ f_d (D_{r(d)}) = \mu( (0,1] \times \{d\})= 1$. Therefore both the measures are $\sigma$-finite.

If $B \in \mathcal{M}_{D_{r(d)}}$ then $B = A \cap {D_{r(d)}} = A^{(e_1)} \times \{e_1\} \cup \ldots \cup A^{(e_N)} \times \{e_N\}$ for some $A \in \mathcal{M}$. If $B$ satisfies $\mu(B) = 0$ then $\sum\limits_{i = 1}^{N} m(A^{(e_i)}) = 0$ and so $m(A^{(e_i)}) = 0$ for each  $i=1,\ldots, N$. Moreover,
$$f_d(B) = \bigcup\limits_{i = 1}^N f_d( A^{(e_i)} \times \{e_i\} )  = \bigcup\limits_{i = 1}^N f_{d,i}( A^{(e_i)} \times \{e_i\}).$$
For each $i = 1,\ldots, N$, 
$$f_{d,i}( A^{(e_i)} \times \{e_i\} ) = \{ f_{d,i}(t,e_i) \,\ | \,\ t \in  A^{(e_i)} \} = \{ (\varphi_i(t), d) \,\ | \,\ t \in  A^{(e_i)} \}$$
and then $$\mu ( f_{d,i}( A^{(e_i)} \times \{e_i\} ) ) = m( \varphi_i(A^{(e_i)} ) ) = m( \varphi_i( A^{(e_i)} \cap (0,1) \cup A^{(e_i)} \cap \{1\} )) =$$
$$= m( \varphi_i( A^{(e_i)} \cap (0,1) ) + m( \varphi_i( A^{(e_i)} \cap \{1\} ) )$$
Note that $A^{(e_i)} \cap (0,1) \subseteq A^{(e_i)}$ is $\mu$-null. As $\varphi_i$ is a diffeomorphism, it follows from \cite[pg. 153]{rudin} that $m( \varphi_i( A^{(e_i)} \cap (0,1) ) = 0$. Clearly $m( \varphi_i( A^{(e_i)} \cap \{1\} ) ) = 0$. Therefore $\mu (f_{d,i}( A^{(e_i)} \times \{e_i\} ) )= 0$ for each $i=1,\ldots,N$ and this shows that $\mu (f_d(B)) = 0$. Therefore $\mu \circ f_d \ll \mu$. By the Radon-Nikodym Theorem and by Proposition \ref{mq0} we conclude this case.  

\item {Case 3: $\#s^{-1}(r(d)) = \infty$}.

By hypothesis $s^{-1}(r(d))$ is countable and then we can write $s^{-1}(r(d)) = \{e_1,e_2, \ldots \}$. Moreover, $(0,1] = \bigcup\limits_{i = 1}^{\infty} I_j$ where $I_j = (\frac{1}{j - 1}, \frac{1}{j}]$. Fix $j \in \mathbb{N}$ and let $\varphi_j: (0,1] \to I_j$ a homeomorphism such that $\varphi_{j |(0,1)}: (0,1) \to (\frac{1}{j - 1}, \frac{1}{j})$ is a diffeomorphism (for example, the linear homeomorphism). Define
$$f_{d,j}: (0,1] \times \{e_j\} \to I_j \times \{d\}$$ 
$$(t,e_j) \mapsto (\varphi_j(t),d)$$
and
$$f_d : D_{r(d)} = (0,1] \times \{e_1, e_2, e_3, \ldots \} \to (0,1] \times \{d\}$$ 
$$(t,e_j) \mapsto (\varphi_j(t),d).$$

The same arguments used in Case 2 can be used in Case 3 and so we get an $E$-branching system.
\end{itemize}
\end{proof}

Note that if a $E$ graph is countable or row-finite then, in particular, $E$ is row-countable. Therefore for this kind of graphs we can ensure the existence of a branching system. 

\begin{cor}
Let $E = (E^0, E^1, r,s)$ be a row-countable graph. Then, for every $v \in E^0$ and $e \in E^1$ it holds that $P_v \neq 0$ and $S_e \neq 0$.
\end{cor}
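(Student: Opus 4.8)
The plan is to exhibit, for any row-countable graph $E$, a concrete $E$-branching system whose induced representation $\pi$ from Theorem \ref{rep} is faithful enough to witness $P_v \neq 0$ and $S_e \neq 0$, and then read off the conclusion from the explicit formulas for $\pi(P_v)$ and $\pi(S_e)$. By Theorem \ref{princ} such a branching system exists; in fact we use precisely the one built there, living on $X = (0,1] \times \Lambda$ with $\mu$ the ``sum of Lebesgue measures'' measure. The point is that in that construction every $D_v$ and every $R_e$ has strictly positive measure: indeed $\mu(D_v) \geq 1$ for sinks and for non-sinks $\mu(D_v) = \mu\big((0,1] \times s^{-1}(v)\big) = \#s^{-1}(v) \geq 1$, while $\mu(R_e) = \mu\big((0,1] \times \{e\}\big) = 1$ for every $e \in E^1$.

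First I would invoke Theorem \ref{princ} to obtain the $E$-branching system and Theorem \ref{rep} to obtain the representation $\pi : C^*(E) \to B(L^2(X,\mathcal{M},\mu))$ with $\pi(P_v)(\phi) = \chi_{D_v}\cdot\phi$ and $\pi(S_e)(\phi) = \chi_{R_e}\cdot\Phi_{f_e^{-1}}^{1/2}\cdot(\phi\circ f_e^{-1})$. For $P_v$: since $\mu(D_v) > 0$, the function $\chi_{D_v} \in L^2(X,\mathcal{M},\mu)$ is nonzero (here we may use that $D_v$ contains a measurable subset of finite positive measure, e.g.\ a single slice $(0,1]\times\{e\}$ for some $e \in s^{-1}(v)$, or $(0,1]\times\{v\}$ if $v$ is a sink), and $\pi(P_v)(\chi_{D_v}) = \chi_{D_v} \neq 0$, so $\pi(P_v) \neq 0$, hence $P_v \neq 0$. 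For $S_e$: take $\phi = \chi_{D_{r(e)}}$, which again lies in $L^2$ after restricting to a finite-measure slice, or more carefully pick a finite-measure subset $B \subseteq D_{r(e)}$ with $\mu(B) > 0$ and let $\phi = \chi_B$; then $\pi(S_e)(\phi) = \chi_{R_e}\cdot\Phi_{f_e^{-1}}^{1/2}\cdot(\chi_B\circ f_e^{-1})$, and since $\Phi_{f_e^{-1}} > 0$ $\mu$-a.e.\ and $f_e^{-1}$ is a measurable bijection between $R_e$ and $D_{r(e)}$, the set $\{\chi_B\circ f_e^{-1} \neq 0\} = f_e(B) \subseteq R_e$ has positive measure (using that $\mu\circ f_e^{-1} \ll \mu$ and $\Phi_{f_e^{-1}} > 0$ forces $\mu(f_e(B)) > 0$ whenever $\mu(B) > 0$). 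Hence $\pi(S_e)(\phi) \neq 0$ as an element of $L^2$, so $S_e \neq 0$.

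The only genuinely delicate point is the integrability: the whole space $X$ has infinite measure, so $\chi_{D_v}$ and $\chi_{D_{r(e)}}$ need not be square-integrable. This is why I would work with a finite-positive-measure measurable subset inside $D_v$ (resp.\ $D_{r(e)}$) — in the explicit model of Theorem \ref{princ} one can simply take the slice $(0,1]\times\{e\}$ for a fixed edge $e \in s^{-1}(v)$ when $v$ is not a sink, or $(0,1]\times\{v\}$ when $v$ is a sink, all of which have measure $1$. One must also check that $\pi(P_v)$ or $\pi(S_e)$ does not annihilate this particular test function: for $P_v$ this is immediate since $\pi(P_v)\chi_B = \chi_{B\cap D_v} = \chi_B$ when $B \subseteq D_v$; for $S_e$ it reduces to the positivity of $\Phi_{f_e^{-1}}$ a.e.\ combined with the a.e.-bijectivity of $f_e^{-1}$, both guaranteed by conditions (5) and (6) of Definition \ref{ESR} (and the remark after it that the Radon–Nikodym derivatives are positive a.e.). I expect this absolute-continuity/positivity bookkeeping for $S_e$ to be the main obstacle, though it is routine given the properties already established.

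\begin{proof}
By Theorem \ref{princ} there is an $E$-branching system; we use the one constructed there, on $X = (0,1] \times \Lambda$ with the measure $\mu$. Let $\pi : C^*(E) \to B(L^2(X,\mathcal{M},\mu))$ be the representation given by Theorem \ref{rep}.

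Fix $v \in E^0$. If $v$ is a sink, put $B = (0,1]\times\{v\} = D_v$; otherwise choose any $e \in s^{-1}(v)$ and put $B = (0,1]\times\{e\} = R_e \subseteq D_v$. In either case $B$ is measurable, $B \subseteq D_v$, and $\mu(B) = m\big((0,1]\big) = 1$, so $\chi_B \in L^2(X,\mathcal{M},\mu)$ and $\chi_B \neq 0$ in $L^2$. Then
$$\pi(P_v)(\chi_B) = \chi_{D_v}\cdot\chi_B = \chi_{B\cap D_v} = \chi_B \neq 0,$$
hence $\pi(P_v) \neq 0$, and therefore $P_v \neq 0$.

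Fix $e \in E^1$ and let $v = r(e)$. Choose a measurable $B \subseteq D_{r(e)}$ with $0 < \mu(B) < \infty$ as above (a single slice works), so $\chi_B \in L^2(X,\mathcal{M},\mu)$. Then
$$\pi(S_e)(\chi_B) = \chi_{R_e}\cdot\Phi_{f_e^{-1}}^{\frac{1}{2}}\cdot(\chi_B\circ f_e^{-1}).$$
Now $\chi_B\circ f_e^{-1} = \chi_{f_e(B)}$ on $R_e$, and by condition (5) of Definition \ref{ESR} the map $f_e : D_{r(e)} \to R_e$ is an a.e.-bijection with measurable a.e.-inverse $f_e^{-1}$. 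Since $\mu\circ f_e^{-1} \ll \mu$ with derivative $\Phi_{f_e^{-1}} > 0$ $\mu$-a.e.\ (condition (6) and the remark following Definition \ref{ESR}), and $\mu(B) > 0$, we get $\int_{f_e(B)} d\mu = \int_{R_e} \chi_B\circ f_e^{-1}\, d\mu$; applying the change of variables as in Proposition \ref{mq0},
$$\int_{R_e} (\chi_B\circ f_e^{-1})\cdot\Phi_{f_e^{-1}}\, d\mu = \int_{R_e} (\chi_B\circ f_e^{-1})\, d(\mu\circ f_e^{-1}) = \int_{D_{r(e)}} \chi_B\, d\mu = \mu(B) > 0.$$
Hence $(\chi_B\circ f_e^{-1})\cdot\Phi_{f_e^{-1}}$ is not $\mu$-a.e.\ zero on $R_e$, so neither is $\chi_{R_e}\cdot\Phi_{f_e^{-1}}^{\frac{1}{2}}\cdot(\chi_B\circ f_e^{-1})$; that is, $\pi(S_e)(\chi_B) \neq 0$ in $L^2$. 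Therefore $\pi(S_e) \neq 0$, and consequently $S_e \neq 0$.
\end{proof}
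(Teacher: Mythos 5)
Your proof is correct, and for $P_v$ it follows the paper's route exactly: take the branching system of Theorem \ref{princ}, the representation of Theorem \ref{rep}, and use that $D_v$ has positive measure. You are in fact slightly more careful than the paper here, since $\mu(D_v)$ can be infinite when $\#s^{-1}(v)=\infty$, so one really should test $\pi(P_v)$ against the characteristic function of a finite-measure slice such as $(0,1]\times\{e\}$ or $(0,1]\times\{v\}$, as you do. The difference is in the second half: the paper disposes of $S_e\neq 0$ in one algebraic line, using (CK1): $S_e^*S_e=P_{r(e)}\neq 0$ forces $S_e\neq 0$ (if $S_e=0$ then $P_{r(e)}=S_e^*S_e=0$). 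You instead prove $\pi(S_e)\neq 0$ analytically, via the change-of-variables identity $\int_{R_e}(\chi_B\circ f_e^{-1})\,\Phi_{f_e^{-1}}\,d\mu=\int_{D_{r(e)}}\chi_B\,d\mu=\mu(B)>0$, which is exactly the manipulation of Proposition \ref{mq0} and is valid; the positive integral shows $\chi_{R_e}\,\Phi_{f_e^{-1}}^{1/2}(\chi_B\circ f_e^{-1})$ is not a.e.\ zero. So your argument buys nothing extra but is a correct, self-contained verification inside the representation, while the paper's C*-identity shortcut is shorter and works verbatim once $P_{r(e)}\neq 0$ is known, with no measure-theoretic bookkeeping at all.
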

\begin{proof}
As $s^{-1}(v)$ is countable for every $v \in E^0$, let $X$ a $E$-branching system. Note that the representation $\pi: C^*(E) \to B(L^2(X))$ induced by Theorems \ref{princ} and \ref{rep} is such that $\pi(P_v)(\phi) = \chi_{D_v} . \phi$ and by the proof of Theorem \ref{princ} we get that $\mu(D_v) > 0$. Therefore $\pi(P_v) \neq 0$ and then $P_v \neq 0$. From $S_e^*S_e = P_{r(e)}\neq 0$ it follows that $S_e \neq 0$.
\end{proof}

As a consequence of the Corollary, we see that for every path $\alpha$ in $E$, the element $S_{\alpha} \in C^*(E)$ is nonzero, moreover, if $\alpha$ and $\beta$ are paths in $E$ such that $r(\alpha) = r(\beta)$ then $S_{\alpha}S_{\beta}^* \neq 0$.

\begin{cor}
Let $E = (E^0, E^1, r,s)$ be a row-countable graph. Then $E$ satisfies the condition $(L)$ if and only if every representation $\pi: C^*(E) \to B(L^2(X))$ induced from a branching system is faithful.
\end{cor}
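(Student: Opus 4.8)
The statement is a characterization of condition $(L)$ — "every cycle has an exit" — for row-countable graphs, in terms of faithfulness of the representations coming from branching systems. This is a two-directional argument, and the two directions are of very different character.

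For the easy direction, suppose $E$ satisfies $(L)$. Then by the Cuntz–Krieger uniqueness theorem (the version valid for arbitrary graphs, as in \cite{ianr}), any $*$-homomorphism from $C^*(E)$ that is nonzero on each $P_v$ and each $S_e$ — more precisely, that sends each $P_v$ to a nonzero projection — is automatically injective. So I would first recall that statement, then invoke the Corollary just proved: for a row-countable $E$, the branching-system representation $\pi$ built in Theorem \ref{princ} (or any branching system at all, provided the sets $D_v$ have positive measure — which is exactly what the proof of Theorem \ref{princ} guarantees, and which I should extract as the needed hypothesis) satisfies $\pi(P_v)\neq 0$ for all $v$. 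Hence $\pi$ is faithful. The only subtlety here is making sure the statement "\emph{every} representation induced from \emph{a} branching system is faithful" is interpreted with the implicit nondegeneracy: a pathological branching system on the zero measure space would give the zero representation. I would note that the branching systems considered are the ones of Definition \ref{ESR} on the space of Theorem \ref{princ}, or more carefully restate the corollary as quantifying over branching systems in which each $D_v$ is non-null; either way condition $(L)$ plus CK-uniqueness does the work.

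For the hard direction — the contrapositive — suppose $E$ does \emph{not} satisfy $(L)$. Then there is a cycle $\alpha = \alpha_1\cdots\alpha_n$ in $E$ with no exit: every vertex on $\alpha$ emits exactly one edge (the corresponding edge of the cycle). I must exhibit a single branching system whose induced representation $\pi$ is not faithful. The standard obstruction is that, for a cycle without exit based at a vertex $v=s(\alpha_1)=r(\alpha_n)$, the element $S_\alpha$ is a partial isometry with $S_\alpha^*S_\alpha = P_v = S_\alpha S_\alpha^*$, so $S_\alpha$ is a "unitary" in the corner $P_v C^*(E) P_v$, and in $C^*(E)$ one knows $C^*(\{S_\alpha, P_v\}) \cong C(\mathbb{T})$ with $S_\alpha \mapsto z$; thus any polynomial $p$ with $p(S_\alpha) = 0$ would force $\pi$ non-injective if we can realize a branching system in which $\pi(S_\alpha)$ has spectrum a proper closed subset of $\mathbb{T}$ — e.g. a single point. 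Concretely, I would build the branching system so that $f_{\alpha_n}\circ\cdots\circ f_{\alpha_1}$ is (conjugate to) the identity on $D_v$, i.e. going around the cycle returns every point to itself. Then $\pi(S_\alpha)$ becomes multiplication by $\chi_{D_v}$ times a Radon–Nikodym factor that equals $1$, so $\pi(S_\alpha) = \pi(P_v)$, whence $\pi(S_\alpha - P_v) = 0$ while $S_\alpha - P_v \neq 0$ in $C^*(E)$ (since it corresponds to $z - 1 \neq 0$ in $C(\mathbb{T})$). To make such a branching system: take the measure space of Theorem \ref{princ}, but on the cycle redefine $D_v, R_e$ and the maps so that the composite around the cycle is $\mathrm{Id}$; since the cycle vertices emit no other edges, this does not conflict with conditions 1–6 elsewhere, and off the cycle one uses the construction of Theorem \ref{princ} verbatim.

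**Main obstacle.** The genuinely delicate point is the bookkeeping in the hard direction: constructing the branching system so that the composition $f_{\alpha_1}^{-1}\circ f_{\alpha_2}^{-1}\circ\cdots$ around the cycle is literally the identity (up to $\mu$-a.e.) while simultaneously keeping all six axioms of Definition \ref{ESR} consistent at the vertices where the cycle attaches to the rest of $E$ (entries into the cycle are allowed; only exits are forbidden), and then verifying cleanly that the resulting $\pi$ kills $S_\alpha - P_v$ — this requires computing $\pi(S_\alpha)$ as the composition $\pi(S_{\alpha_1})\cdots\pi(S_{\alpha_n})$ and tracking the product of Radon–Nikodym half-densities, using the cocycle identity $(\Phi_{f_e^{-1}}\circ f_e)\cdot\Phi_{f_e} = 1$ from item 6 to see the densities telescope to $1$. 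I also need to be sure $S_\alpha \neq P_v$ in the universal $C^*(E)$; this follows from the existence of at least one representation (e.g. one from a \emph{different} branching system, where going around the cycle is an irrational rotation) in which $\pi(S_\alpha)\neq\pi(P_v)$, or directly from the known structure $P_v C^*(E) P_v \supseteq C^*(S_\alpha)\cong C(\mathbb{T})$ for a cycle without exit. Assembling these pieces and checking the a.e. identities is where the real work lies; everything else is a citation or a one-line consequence.
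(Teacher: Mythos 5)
Your proposal is correct and follows essentially the same route as the paper: the forward direction is the Cuntz--Krieger uniqueness theorem applied with $\pi(P_v)\neq 0$, and the converse is handled by modifying the branching system of Theorem \ref{princ} along the exit-less cycle so that the return map is the identity (forcing $\varphi(S_{e_1}\cdots S_{e_n})=\varphi(P_{s(e_1)})$), while a second, differently twisted branching system (the paper uses $f_{e_1}(t,e_2)=(\sqrt{t},e_1)$, $f_{e_n}(t,e_1)=(\sqrt{t},e_n)$; you propose an irrational-rotation-type system or the $C(\mathbb{T})$ picture) witnesses $S_{e_1}\cdots S_{e_n}\neq P_{s(e_1)}$ in $C^*(E)$. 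The telescoping of the Radon--Nikodym factors and the separate verification of $S_\alpha\neq P_v$ that you flag as the main work are exactly the points the paper's proof carries out.
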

\begin{proof}
For the direct implication, as $\pi(P_v)$ and $P_v$ are nonzero elements, the result follows from \cite[Theorem (2)]{uck}. For the converse, let's show that if $\alpha = e_1 \ldots e_n$ is a path without exit then exists a representation $\varphi$ such that $\varphi$ is not injective. Let $X$ be the branching system as in the proof of Theorem \ref{princ}. We redefine only the maps $f_{e_i}$ for $1\leq i\leq n$. From the proof of Theorem \ref{princ} we get $R_{e_i} = (0,1] \times \{e_i\}$ and  
$D_{r(e_i)} = (0,1] \times s^{-1}(r(e_i)) = (0,1] \times \{e_{i+1}\}$ for each $1\leq i\leq n-1$ and $D_{r(e_n)} = (0,1] \times s^{-1}(r(e_n)) = (0,1] \times \{e_1\}.$
For  $i = 1$, define $f_{e_1} : D_{r(e_1)} \to R_{e_1}$  by $f_{e_1}(t,e_2) = (\sqrt{t}, e_1)$ and for $i = n$, define $f_{e_n} : D_{r(e_n)} \to R_{e_n}$ by $f_{e_n}(t,e_1) = (\sqrt{t}, e_n)$. For each $i = 2, \ldots, n -1$ define $f_{e_i}: D_{r(e_i)} \to R_{e_i}$ by $f_{e_i}(t, e_{i + 1}) = (t, e_i)$. 
So we get a new $E$-branching system. For this branching system, by Theorems \ref{rep} we get a representation $\psi: C^*(E) \to B(L^2(X))$ such that $\psi(S_{e_1} \ldots S_{e_n}) \neq \psi( P_{s(e_1)})$. In particular, $S_{e_1} \ldots S_{e_n} \neq P_{s(e_1)}$.

Moreover,  we can choose $f_{e_1} = (t,e_2) = (\sqrt{t}, e_1)$, $f_{e_n}(t,e_1) = (t^2, e_n)$ and $f_{e_i}(t, e_{i + 1}) = (t,e_i)$ for each $1< i < n$, getting another $E$-branching system $Y$. Let $\varphi: C^*(E) \to B(L^2(Y))$ be the induced representation. For $\phi \in L^2(Y)$ it holds that

$$\varphi(S_{e_1} \ldots S_{e_n})(\phi) = \chi_{R_{e_1}} . ( \Phi_{f_{e_n}^{-1} \circ \ldots \circ f_{e_1}^{-1}} ) . (\phi \circ f_{e_n}^{-1} \circ \ldots \circ f_{e_1}^{-1}) = $$ $$=  \chi_{R_{e_1}} . \phi = \chi_{D_{r(e_n)}} . \phi = \chi_{D_{s(e_1)}} . \phi = \varphi(P_{s(e_1)})(\phi)$$
and then $\varphi(S_{e_1} \ldots S_{e_n}) = \varphi(P_{s(e_1)})$. Since $S_{e_1} \ldots S_{e_n} \neq P_{s(e_1)}$ then $\varphi$ is not injective.

\end{proof}

\section{Unitary equivalence and permutative representations}

In this section we show that each permutative  representation of graph C*-algebras in Hilbert spaces operators, even the spaces being non-separable,  are unitarily equivalent to representations induced from branching systems. Moreover, we find a class of graphs where each representation is permutative.

Let $\varphi: C^*(E) \to B(H)$ a representation of $C^*(E)$. The relations that define  the universal C*-algebra $C^*(E)$ and the fact that $\varphi$ is a $*$-homomorphism ensure that $\{\varphi(P_v)\}_{v \in E^0}$ and $\{\varphi(S_e) \varphi( S_e^*)\}_{e \in E^1}$ are families of mutually orthogonal projections. For each edge $e$ and vertex $v$ let $$H_v := \varphi(P_v)(H) \,\ \,\ \,\ \,\ H_e = \varphi(S_e)\varphi(S_e)^*(H).$$

As $\varphi(P_v)$ and $\varphi(S_e)\varphi(S_e)^*$ are projections, $H_v$ and $H_e$ are closed subspaces of $H$. Moreover, it holds that:

\begin{enumerate}
\item If $v \neq w$ then $H_v \cap H_w = \{0_H\}$,
\item If $e \neq f$ then $H_e \cap H_f = \{0_H\}$,
\item The restriction of $\varphi(S_e)$ given by $\varphi(S_e) : H_{r(e)} \to H_e$ is a surjective, isometric and unitary operator,
\item If $0 < \#s^{-1}(v) < \infty$ then $H_v = \bigoplus\limits_{e \in s^{-1}(v)}H_e$ and if  $\#s^{-1}(v) = \infty$ then we may write $H_v = \left( \bigoplus\limits_{e \in s^{-1}(v)}H_e \right) \bigoplus V_v$ where $V_v = \left( \bigoplus\limits_{e \in s^{-1}(v)}H_e \right)^{\perp}$.
\item We write $H = \left( \bigoplus\limits_{v \in E^0} H_v \right) \bigoplus V$ where $V = \left( \bigoplus\limits_{v \in E^0} H_v \right)^{\perp}$. 
\end{enumerate}

Recall that $\bigoplus\limits_{\lambda \in \Lambda}A_{\lambda} := \overline{span \left( \bigcup \limits_{\lambda \in \Lambda}A_{\lambda}\right)}$ provided that $\{A_{\lambda}\}_{\lambda \in \Lambda}$ is a family of mutually orthogonal subspaces of a Hilbert space (that's the case). The proof of properties above follows from the relations that define $C^*(E)$. Other interesting fact is the relation between total orthonormal sets in $A_{\lambda}$ with total orthonormal sets in $\bigoplus\limits_{\lambda \in \Lambda}A_{\lambda}$; this relation is given by the following proposition.

\begin{prop}
Let $X$ be a Hilbert space and $\{A_{\lambda} \}_{\lambda \in \Lambda}$ a collection of mutually orthogonal subspaces of $X$. If for each $\lambda \in \Lambda$, $B_{\lambda} \subseteq A_{\lambda}$ is a total orthonormal set in $A_{\lambda}$ then $\bigcup\limits_{\lambda \in \Lambda} B_{\lambda}$ is a total orthonormal set in $H := \bigoplus\limits_{\lambda \in \Lambda}A_{\lambda}$.
\end{prop}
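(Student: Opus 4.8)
The plan is to verify separately the two defining properties of a total orthonormal set: first that $B:=\bigcup_{\lambda\in\Lambda}B_\lambda$ is an orthonormal set, and then that its closed linear span is all of $H$.

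For orthonormality, I would first note that the $B_\lambda$ are pairwise disjoint: if $\lambda\neq\mu$ then $A_\lambda\cap A_\mu=\{0\}$ (mutual orthogonality forces the intersection to be trivial), while every element of $B_\lambda$ has norm $1$ and is therefore nonzero, so $B_\lambda\cap B_\mu=\varnothing$; hence $B$ is genuinely the disjoint union of the $B_\lambda$. Given two distinct elements $x,y\in B$, either both lie in the same $B_\lambda$, in which case $\langle x,y\rangle=0$ because $B_\lambda$ is orthonormal, or $x\in B_\lambda$ and $y\in B_\mu$ with $\lambda\neq\mu$, in which case $\langle x,y\rangle=0$ because $A_\lambda\perp A_\mu$. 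Since every element of $B$ has norm $1$, $B$ is orthonormal.

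For totality, recall that ``$B_\lambda$ total in $A_\lambda$'' means $\overline{\mathrm{span}(B_\lambda)}=A_\lambda$, and that $H=\overline{\mathrm{span}\left(\bigcup_{\lambda\in\Lambda}A_\lambda\right)}$ by the definition of the direct sum. Since $\mathrm{span}(B)\supseteq\mathrm{span}(B_\lambda)$ for each $\lambda$, taking closures yields $\overline{\mathrm{span}(B)}\supseteq\overline{\mathrm{span}(B_\lambda)}=A_\lambda$ for every $\lambda$, hence $\overline{\mathrm{span}(B)}\supseteq\bigcup_{\lambda\in\Lambda}A_\lambda$. As $\overline{\mathrm{span}(B)}$ is a closed subspace, it then contains $\overline{\mathrm{span}\left(\bigcup_{\lambda\in\Lambda}A_\lambda\right)}=H$. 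The reverse inclusion is immediate because $B\subseteq\bigcup_{\lambda\in\Lambda}A_\lambda\subseteq H$ and $H$ is closed. Therefore $\overline{\mathrm{span}(B)}=H$, which together with orthonormality shows that $B$ is a total orthonormal set in $H$.

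I do not expect any genuine obstacle here; the only points needing a line of care are that a union of mutually orthogonal subspaces with trivial pairwise intersections produces pairwise disjoint orthonormal bases (so that $B$ is a set rather than a multiset), and that the span of a union contains the span of each piece, which is exactly what lets the closure absorb each $A_\lambda$ and hence their closed linear span $H$.
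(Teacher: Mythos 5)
Your proof is correct, but the totality argument runs along a different track from the paper's. The paper gives a hands-on approximation: fix $h\in H$ and $\epsilon>0$, pick $a=a_{\lambda_1}+\dots+a_{\lambda_n}\in\mathrm{span}\bigcup_\lambda A_\lambda$ with $\|h-a\|<\epsilon/2$, then approximate each $a_{\lambda_i}$ within $\epsilon/2n$ by an element of $\mathrm{span}\,B_{\lambda_i}$, and add up the errors. You instead argue structurally: $\overline{\mathrm{span}(B)}$ is a closed subspace containing each $\overline{\mathrm{span}(B_{\lambda})}\supseteq A_\lambda$, hence it contains $\mathrm{span}\bigl(\bigcup_\lambda A_\lambda\bigr)$ and its closure, which is $H$ by definition of $\bigoplus_\lambda A_\lambda$. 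The two arguments prove the same density statement; yours avoids explicit estimates entirely (the $\epsilon/2n$ bookkeeping is absorbed into the fact that a closed subspace containing a set contains the closed span of that set), while the paper's is more elementary and self-contained, making the approximation visible. You also spend more care than the paper on the orthonormality half --- checking that the $B_\lambda$ are pairwise disjoint because $A_\lambda\cap A_\mu=\{0\}$ and basis vectors are nonzero --- which the paper dismisses as easy; that is a reasonable point to make explicit. One tiny remark: you write $\overline{\mathrm{span}(B_\lambda)}=A_\lambda$, which presumes each $A_\lambda$ is closed; for the argument only the inclusion $A_\lambda\subseteq\overline{\mathrm{span}(B_\lambda)}$ is needed, and that is what totality of $B_\lambda$ in $A_\lambda$ gives in general (in the paper's application the $A_\lambda$ are the closed ranges of projections, so nothing is lost either way).
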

\begin{proof}
It is easy to see that $\bigcup\limits_{\lambda \in \Lambda} B_{\lambda}$ is an orthonormal set.
We show that $\bigcup\limits_{\lambda \in \Lambda} B_{\lambda}$ is total in $H$.

Fix $h \in H$ and $\epsilon > 0$. As $H = \bigoplus\limits_{\lambda \in \Lambda}A_{\lambda}$, there exists $a = a_{\lambda_1} + \ldots + a_{\lambda_n} \in span \bigcup\limits_{\lambda \in \Lambda} A_{\lambda}$ (where $n \in \mathbb{N}$ and $\lambda_i \in A_{\lambda_i}$ for each $i=1,\ldots,n$) such that $\|h - a\| < \dfrac{\epsilon}{2}$. As $B_{\lambda_i}$ is a total orthonormal set in $A_{\lambda_i}$ then there exists $b_{\lambda_i} \in span \,\ B_{\lambda_i}$ such that $\|a_{\lambda_i} - b_{\lambda_i}\| < \dfrac{\epsilon}{2n}$. Define $b = b_{\lambda_1} +  \ldots + b_{\lambda_n}$, note that $b \in span\bigcup\limits_{\lambda \in \Lambda} B_{\lambda}$ and moreover
\begin{align*}
\|h - b\| & \leq    \|h - a\| + \|a - b\| \\
& <  \dfrac{\epsilon}{2} + \|a_{\lambda_1} - b_{\lambda_1} + \ldots + a_{\lambda_n} - b_{\lambda_n}\| \\
& \leq  \dfrac{\epsilon}{2} + \|a_{\lambda_1} - b_{\lambda_1}\| + \ldots + \|a_{\lambda_n} - b_{\lambda_n}\| \\
& <    \dfrac{\epsilon}{2} + n \dfrac{\epsilon}{2n} = \epsilon. \\
\end{align*}
\end{proof}

We show some consequences of these properties.
\begin{prop} Let $E$ be a graph and $\varphi: C^*(E) \to B(H)$ a representation.
\begin{enumerate}
\item For each $x \in C^*(E)$, $\varphi(x)$ vanishes at $V$ (where $V$ is as above).
\item If $H$ is separable then $\varphi(v) \neq 0$ for, at most, a countable number of vertices $v \in E^0$. 
\end{enumerate}
\end{prop}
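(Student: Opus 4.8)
The plan is to prove the two claims separately, each as a direct consequence of the structural decomposition of $H$ already recorded in properties 1--5 above.

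For the first item, I would argue that $\varphi(C^*(E))V \subseteq \{0_H\}$ by checking it on the generators and then extending. Since $C^*(E)$ is generated as a $C^*$-algebra by $\{P_v\} \cup \{S_e\}$, and since the set of $x \in C^*(E)$ with $\varphi(x)V = \{0\}$ need not be a subalgebra, the cleaner route is to observe that $V = \left(\bigoplus_{v\in E^0} H_v\right)^{\perp}$ is contained in the kernel of each $\varphi(P_v)$: indeed $H_v = \varphi(P_v)(H)$, so $V \perp H_v$ for every $v$, and since $\varphi(P_v)$ is the orthogonal projection onto $H_v$, we get $\varphi(P_v)\xi = 0$ for all $\xi \in V$. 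Now for the edge generators: $\varphi(S_e) = \varphi(S_e)\varphi(P_{r(e)})$ by (CK1) applied under the $*$-homomorphism $\varphi$ (more precisely $S_e = S_e S_e^* S_e$ and $S_e^* S_e = P_{r(e)}$, so $S_e = S_e P_{r(e)}$), hence $\varphi(S_e)\xi = \varphi(S_e)\varphi(P_{r(e)})\xi = 0$ for $\xi \in V$; similarly $\varphi(S_e^*) = \varphi(P_{s(e)})\varphi(S_e)^*$ using (CK2), and $\varphi(S_e^*)\xi = \varphi(P_{s(e)})(\varphi(S_e)^*\xi)$, which lies in $H_{s(e)} \subseteq \bigoplus_v H_v$; to see this vanishes on $V$, note instead that $\varphi(S_e)^* \xi \perp H_{r(e)}$ is automatic from $\varphi(S_e)^*\xi = \varphi(P_{r(e)})\varphi(S_e)^*\xi$... — more directly, $\varphi(S_e^*S_e) = \varphi(P_{r(e)})$ kills $V$, and $\|\varphi(S_e^*)\xi\|^2 = \langle \varphi(S_eS_e^*)\xi, \xi\rangle$, while $\varphi(S_eS_e^*)$ is the projection onto $H_e \subseteq H_{s(e)} \subseteq \bigoplus_v H_v$, so it too annihilates $V$. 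Thus $W := \{x \in C^*(E) : \varphi(x)V = \{0\}\}$ contains all generators and all their adjoints, and is plainly a norm-closed linear subspace; moreover, since $V$ is $\varphi$-invariant — because $V^\perp = \bigoplus_v H_v$ is invariant under $\varphi(P_v)$, $\varphi(S_e)$, $\varphi(S_e)^*$, hence under all of $\varphi(C^*(E))$, so $V$ is invariant too — $W$ is in fact a subalgebra, and therefore $W = C^*(E)$.

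For the second item, suppose $H$ is separable and that $\varphi(P_v) \neq 0$ for uncountably many $v \in E^0$. For each such $v$ pick a unit vector $\xi_v \in H_v$. By property 1 of the $H_v$'s (namely $H_v \cap H_w = \{0\}$ for $v \neq w$) together with the fact that the $\varphi(P_v)$ are \emph{mutually orthogonal} projections — not merely having trivial intersection — the subspaces $H_v$ are pairwise orthogonal, so $\{\xi_v\}$ is an uncountable orthonormal set in $H$, contradicting separability of $H$ (a separable Hilbert space has only countable orthonormal sets, since the open balls of radius $\tfrac{1}{2}$ about distinct members of an orthonormal set are disjoint). Hence $\varphi(P_v) = 0$ for all but countably many $v$.

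The only genuinely delicate point is the first item: one must be slightly careful that "annihilates $V$" is preserved under the algebra operations, and the clean way is to first establish that $V$ (equivalently $V^\perp = \bigoplus_{v} H_v$) is invariant under $\varphi(C^*(E))$ — which reduces to invariance under the generators and adjoints — and only then conclude that $W$ is a $C^*$-subalgebra containing the generators. Everything else is a routine unwinding of the Cuntz--Krebs relations under the $*$-homomorphism $\varphi$.
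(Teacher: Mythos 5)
Your proof is correct, but for item 1 it takes a different route from the paper. The paper checks vanishing on the dense spanning family $\{S_\mu S_\nu^*\}$: for $y\in V$ one computes $\|\varphi(S_\mu S_\nu^*)y\|^2=\langle y,\varphi(S_\nu S_\mu^* S_\mu S_\nu^*)y\rangle$, observes that the right-hand vector lies in $H_{s(\nu)}\perp y$, and concludes by continuity, using the standard fact that $\mathrm{span}\{S_\mu S_\nu^*\}$ is dense in $C^*(E)$. You instead verify the vanishing only on the generators $\varphi(P_v)$, $\varphi(S_e)$, $\varphi(S_e^*)$ (via $S_e=S_eP_{r(e)}$ and $\|\varphi(S_e^*)\xi\|^2=\langle\varphi(S_eS_e^*)\xi,\xi\rangle$ with $H_e\subseteq H_{s(e)}$) and then push this to all of $C^*(E)$ through the set $W=\{x:\varphi(x)V=\{0\}\}$; this is self-contained and avoids citing the density of the span of the monomials, which is what the paper's argument buys for free. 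One small inaccuracy in your write-up: your worry that $W$ "need not be a subalgebra" is unfounded, and the detour through $\varphi$-invariance of $V$ is unnecessary (though correct) --- if $\varphi(y)V=\{0\}$ then $\varphi(xy)V=\varphi(x)\varphi(y)V=\{0\}$ for any $x$, so $W$ is automatically a norm-closed left ideal, hence closed under products, and since it contains the self-adjoint generating set $\{P_v,S_e,S_e^*\}$ it contains all of $C^*(E)$. For item 2 your argument is essentially the paper's (the paper takes unions of total orthonormal sets of the $H_v$, you take one unit vector per nonzero $H_v$, which is in fact slightly more economical); just make sure you justify, as you do, that the $H_v$ are pairwise orthogonal and not merely intersecting trivially, which follows from the mutual orthogonality of the projections $\varphi(P_v)$.
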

\begin{proof}
First we prove 1. We know that $H = \left( \bigoplus\limits_{v \in E^0} H_v \right) \bigoplus V$. Let $y \in V$ and $\mu, \nu$ paths in $E$ such that $r(\mu) = r(\nu)$. If $a = S_{\mu}S_{\nu}^*$ then 
$$\|\varphi(a)(y)\|^2 = \langle \varphi(a)(y), \varphi(a)(y) \rangle = \langle y, \varphi(a^*a)(y) \rangle= $$ $$ = \langle y, \varphi(S_{\nu}S_{\mu}^*S_{\mu}S_{\nu}^*)(y) \rangle = \langle y, \varphi(P_{s(\nu)}) \varphi(S_{\nu}S_{\mu}^*	S_{\mu}S_{\nu}^*)(y) \rangle = 0.$$ 

As each element of $C^*(E)$ may be approximated by elements of the form $S_{\mu}S_{\nu}$ the result follows from the the continuity of the inner product in $H$.

Now we prove 2. If $\varphi(v) \neq 0$ for a uncountable number of vertices $v \in E^0$ we choose, for every $H_v$, a total orthonormal subset $B_v \subseteq H_v$. Then $\bigcup\limits_{v \in E^0}B_v$ is a uncountable total orthonormal set of $ \bigoplus\limits_{v \in E^0} H_v $. Since $H$ is separable, $\bigoplus\limits_{v \in E^0} H_v \subseteq H$ is separable. This is a contradiction because every total orthonormal subset of a separable Hilbert space is countable.
\end{proof}

\begin{defi}
Let $\varphi: C^*(E) \to B(H)$ a representation. We say that $\varphi$ is permutative if for every $e \in E^1$, $v \in E^0$ there exist total orthonormal sets $B_e \subseteq H_e$ and $B_v \subseteq H_v$ such that
\begin{itemize}
\item If $e \in s^{-1}(v)$ then $B_e \subseteq B_v$,
\item If $0 < \#s^{-1}(v) < \infty$ then $B_v = \bigcup\limits_{e \in s^{-1}(v)}B_e$,
\item $\varphi(S_e)(B_{r(e)}) = B_e$.
\end{itemize}
\end{defi}

As $\varphi(S_e): H_{r(e)} \to H_e$ is unitary, the third condition of the definition above is equivalent to $B_{r(e)} = \varphi(S_e)^*(B_e)$. Furthermore, $\varphi(S_e)(B_{r(e)})$ is always a total orthonormal set in $H_e$ because $\varphi(S_e):H_{r(e)}\rightarrow H_e$ is isometric and surjective.

\begin{exem} Here we show an example of a permutative representation. Let $E$ be the graph as follows.

\begin{center}
\begin{tikzpicture}[->,auto,
                    thick]
\tikzset{every state/.style={minimum size=1pt}} 
\tikzset{every loop/.style={min distance=10mm,looseness=20}} 

\node[state,inner sep=0.5pt,draw=none] (A)   {$v$};
\node[state,inner sep=0.5pt,draw=none] (B) [below  left=0.1cm and 0.1cm of A]   {$.$};
\node[state,inner sep=0.5pt,draw=none] (C) [below right=0.025cm and 0.05cm of B]   {$.$};
\node[state,inner sep=0.5pt,draw=none] (D) [above left=0.025cm and 0.0005cm of B]   {$.$};
\node[state,inner sep=0.5pt,draw=none] (F) [above=0.02cm of D]   {$.$};

\path[->] (A) edge [in=10,out=60,loop] node[above] {$e_1$} (A);
\path[<-] (A) edge  [in=130,out=80,loop] node[left]{$e_2$} (A);
\path[<-] (A) edge  [in=290,out=340,loop] node[below]{$e_k$} (A);
\end{tikzpicture}
\end{center}
For each $i=1,\ldots,k$ define the operator $U_i : l^2 \to l^2$ given by
$$U_i \left( (x_n)_{n \in \mathbb{N}}\right) = (0, \ldots, 0, \overbrace{x_1}^{i}, 0, \ldots, 0, \overbrace{x_2}^{i + k}, 0, \ldots, 0, \overbrace{x_3}^{i + 2k}, \ldots),$$ and notice that $$U_i^*\left( (y_n)_{n \in \mathbb{N}}\right) = (y_i, y_{i + k}, y_{i + 2k}, \ldots ).$$

By the universal property of $C^*(E)$ there exists a $*$-homomorphism $\varphi: C^*(E) \to B(l^2)$ such that $\varphi(S_{e_i}) = U_i$ and $\varphi(P_v) = Id$. Let's show that $\varphi$ is permutative. As $$U_iU_i^*(y) = (0, \ldots, 0, \overbrace{y_i}^{i}, 0, \ldots, 0, \overbrace{y_{i + k}}^{i + k}, \ldots) )$$  then
 $$H_{e_i} = U_iU_i^*(l^2) = \overline{ span \{\delta_i, \delta_{i + k}, \delta_{i + 2k} \ldots \} }.$$
 where $(\delta_n)_{n \in \mathbb{N}}$ is the canonical basis of $l^2$. For each $i=1,\ldots,k$ choose $B_{e_i} = \{\delta_i, \delta_{i + k}, \delta_{i + 2k} \ldots \} \subseteq H_{e_i} = U_iU_i^*(l^2)$ and define:
$$B_v := \bigcup\limits_{e \in s^{-1}(v)} B_e = \bigcup\limits_{i = 1}^k B_{e_i} = \{\delta_1, \delta_2, \delta_3, \ldots \} = (\delta_n)_{n \in \mathbb{N}}.$$
Note that $\pi(S_{e_i})(B_v) = \pi(S_{e_i})( (\delta_n)_{n \in \mathbb{N}} ) = B_{e_i}$ because $\pi(S_{e_i})(\delta_1) = \delta_i$, $\pi(S_{e_i})(\delta_2) = \delta_{i + k}$, $\pi(S_{e_i})(\delta_3) = \delta_{i + 2k}$ and so on. So $\varphi$ is permutative. 
\end{exem}

\begin{exem}\label{exemploloop}
Now we will show an example of a non-permutative representation. Let $E$ be the graph 
\begin{center}
\begin{tikzpicture}[->,auto,node distance=2.8cm,
                    thick]
\tikzset{every state/.style={minimum size=1pt}} 
\tikzset{every loop/.style={min distance=10mm,looseness=20}} 

\node[state,inner sep=0.5pt,draw=none] (A)   {$v$};

\path[->] (A) edge [in=10,out=110,loop] node[auto] {$e$} (A); 
    \end{tikzpicture}    
  \end{center}
Define $U: \mathbb{C}^2 \to \mathbb{C}^2$ by $U(z,w) = (iz,iw)$. By the universal property of $C^*(E)$ there exists a $*$-homomorphism $\varphi: C^*(E) \to  B(\mathbb{C}^2)$ such that $\varphi(S_e) = U$ and  $\varphi(P_v) = Id$. Suppose $\varphi$ permutative.  Then there exist an orthonormal total set $B_e \subseteq H_e = UU^*(\mathbb{C}^2) = \mathbb{C}^2$ and as orthonormal total set $B_v \subseteq H_v = \mathbb{C}^2$ such that $B_v = B_e$ and $\varphi(S_e)(B_{r(e)}) = U(B_{r(e)}) = B_e$. As $B_e$ is an orthonormal total set in $\mathbb{C}^2$ we can write $B_e = \{ (z_1, z_2), (w_1, w_2) \}$. Thus
$$\varphi(S_e)(B_{r(e)}) = U(B_{r(e)}) = \{ (iz_1, iz_2), (iw_1, iw_2) \} \neq B_e.$$
That's a contradiction. So $\varphi$ isn't permutative.
\end{exem}

Now we prove the main theorem of this section. This theorem is a generalization of Theorem 2.1 of \cite{B}. 

\begin{theorem}\label{bpb}
Let $\varphi: C^*(E) \to B(H)$ be a representation and suppose that $\varphi$ is permutative. Then there exists a representation $\pi: C^*(E) \to B(l^2({\Lambda}))$, arising from a branching system, such that $\varphi$ and $\pi$ are unitarily equivalent.
\end{theorem}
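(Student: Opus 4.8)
The plan is to convert the combinatorial data of the permutative structure --- the families of total orthonormal sets $\{B_v\}_{v\in E^0}$ and $\{B_e\}_{e\in E^1}$ --- into a \emph{discrete} $E$-branching system whose associated representation (via Theorem \ref{rep}) is a coordinatized copy of $\varphi$. Concretely, I would first fix total orthonormal sets $B_v\subseteq H_v$ and $B_e\subseteq H_e$ witnessing permutativity, together with a total orthonormal set $B_V\subseteq V$ where $V=\left(\bigoplus_{v\in E^0}H_v\right)^{\perp}$, and set $\Lambda:=B_V\cup\bigcup_{v\in E^0}B_v$. Since the $H_v$ are mutually orthogonal and each is orthogonal to $V$, $\Lambda$ is an orthonormal set, and by the proposition on total orthonormal sets in orthogonal direct sums it is total in $\left(\bigoplus_{v\in E^0}H_v\right)\oplus V=H$. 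Hence there is a unique unitary $U\colon H\to l^2(\Lambda)$ with $U\xi=\delta_\xi$ for every $\xi\in\Lambda$, where $\{\delta_\xi\}_{\xi\in\Lambda}$ is the canonical basis of $l^2(\Lambda)$. A point used repeatedly below: for $\xi\in\Lambda$ one has $\xi\in H_{r(e)}$ if and only if $\xi\in B_{r(e)}$, because every element of $\Lambda\setminus B_{r(e)}$ lies in a subspace orthogonal to $H_{r(e)}$.

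Next I would equip $\Lambda$ with the power-set $\sigma$-algebra and the counting measure $\mu$, so that $L^2(\Lambda,2^\Lambda,\mu)=l^2(\Lambda)$, and define the branching system by $R_e:=B_e$, $D_v:=B_v$ (as subsets of $\Lambda$), $f_e:=\varphi(S_e)|_{B_{r(e)}}\colon B_{r(e)}\to B_e$ --- a bijection because $\varphi(S_e)\colon H_{r(e)}\to H_e$ is unitary with $\varphi(S_e)(B_{r(e)})=B_e$ --- with inverse $f_e^{-1}$, and $\Phi_{f_e}\equiv\Phi_{f_e^{-1}}\equiv 1$. Items 1--4 of Definition \ref{ESR} follow from mutual orthogonality of the $H_e$ (resp. $H_v$) together with the relations $B_e\subseteq B_{s(e)}$ and $B_v=\bigcup_{e\in s^{-1}(v)}B_e$ for finite $s^{-1}(v)$; every map is measurable since the $\sigma$-algebra is the full power set; and since $f_e$ is a bijection, $\mu\circ f_e=\mu$ on $2^{D_{r(e)}}$, so the constant $1$ is indeed $\frac{d\mu\circ f_e}{d\mu}$ and item 6 holds (one could instead invoke Proposition \ref{mq0}, the counting measure being semi-finite). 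Theorem \ref{rep} then produces a representation $\pi\colon C^*(E)\to B(l^2(\Lambda))$ with $\pi(P_v)\phi=\chi_{D_v}\cdot\phi$ and $\pi(S_e)\phi=\chi_{R_e}\cdot(\phi\circ f_e^{-1})$.

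It then remains to check $U\varphi(a)=\pi(a)U$ for $a$ among the generators $P_v,S_e$, and it suffices to evaluate both sides on each $\delta_\xi=U\xi$, $\xi\in\Lambda$. Since $\varphi(P_v)$ is the orthogonal projection onto $H_v$, we get $\varphi(P_v)\xi=\xi$ when $\xi\in B_v=D_v$ and $\varphi(P_v)\xi=0$ otherwise, which matches $\pi(P_v)\delta_\xi=\chi_{D_v}\cdot\delta_\xi$. For $S_e$: the operator $\varphi(S_e)$ vanishes on $H_{r(e)}^{\perp}$ and acts as $f_e$ on $B_{r(e)}$, so $U\varphi(S_e)\xi$ equals $\delta_{f_e(\xi)}$ if $\xi\in B_{r(e)}$ and $0$ otherwise; on the other hand $\pi(S_e)\delta_\xi=\chi_{R_e}\cdot(\delta_\xi\circ f_e^{-1})$ equals $\delta_{f_e(\xi)}$ exactly when $\xi\in D_{r(e)}=B_{r(e)}$ and vanishes otherwise. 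The two sides agree, so $U$ intertwines $\varphi$ and $\pi$ on generators, hence on all of $C^*(E)$, which is the desired unitary equivalence.

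I expect the only genuine difficulty to be organizational: choosing $\Lambda$ so that the three kinds of permutative data (the $B_v$, the $B_e$, and the ``leftover'' spaces $V$ and $V_v$) assemble into a single orthonormal basis that is simultaneously compatible with all six branching-system axioms, and then tracking carefully how the formula $\phi\mapsto\chi_{R_e}\cdot(\phi\circ f_e^{-1})$ acts on the basis vectors $\delta_\xi$. No analytic subtlety arises because the measure is discrete; in particular the non-separable case needs nothing beyond allowing $\Lambda$ to have arbitrary cardinality.
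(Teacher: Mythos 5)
Your proposal is correct and takes essentially the same approach as the paper: both build a discrete branching system whose underlying set is a total orthonormal basis assembled from the permutative sets $B_v$ together with a basis of $V$, use the counting measure so that $\Phi_{f_e}=\Phi_{f_e^{-1}}=1$, define $f_e$ as (the index version of) the restriction of $\varphi(S_e)$ to $B_{r(e)}$, invoke Theorem \ref{rep}, and verify the intertwining on the generators $P_v,S_e$ by evaluating on basis vectors, splitting into the cases $\xi\in B_{r(e)}$ and $\xi\notin B_{r(e)}$. The only difference is cosmetic: the paper indexes the basis by an abstract set $\Lambda$ and lets $R_e,D_v$ be sets of indices, while you take $\Lambda$ to be the basis itself with $R_e=B_e$, $D_v=B_v$.
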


\begin{proof} We use the same notations as in the beginning of this section. Recall that $H = \left( \bigoplus\limits_{v \in E^0} H_v \right) \bigoplus V$. Choose $B_v \subseteq H_v$ a total orthonormal set. Moreover, defining $B' = \bigcup\limits_{v \in E^0}B_v$ we obtain a total orthonormal set in $\bigoplus\limits_{v \in E^0} H_v$. Also, choosing $D$ a orthonormal total set in $V$ we define $B = B' \cup D$ and write $B = \{b_{\lambda}\}_{\lambda \in \Lambda}$, which is a total orthonormal set in $H$.	Let  $(\Lambda, \eta)$ be the measure space where $\eta$ is the counting measure. For each $e \in E^1$, $v \in E^0$ define the sets
$$R_e = \{\lambda \in \Lambda \,\ | \,\ b_{\lambda} \in B_e \} \hspace{1cm} \text{ and } \hspace{1 cm} D_v = \{ \lambda \in \Lambda \,\ | \,\ b_{\lambda} \in B_v \}.$$
Now we define the desired branching system. If $e \neq f$, $B_e \cap B_f = \emptyset$ (because $H_e \cap H_f = \{0_H\}$) and then $R_e \cap R_f = \emptyset$. The same argument shows that $D_v \cap D_w = \emptyset$ (for  $v \neq w$). 
For an edge $e$ and $\lambda \in R_e$, as $e \in s^{-1}(s(e))$ and $\varphi$ is permutative then $B_e \subseteq B_{s(e)}$, and so $b_{\lambda} \in B_{s(e)}$. This means that $\lambda \in D_{s(e)}$ and so $R_e \subseteq D_{s(e)}$. Moreover, if $v \in E^0$ is such that $0 < \#s^{-1}(v) < \infty$ then $B_v = \bigcup\limits_{e \in s^{-1}(v)}B_e$ and so
$$\lambda \in D_v  \Leftrightarrow b_{\lambda} \in B_v \Leftrightarrow b_{\lambda} \in B_{e'} \textrm{ for some } e'\in s^{-1}(v) \Leftrightarrow \lambda \in \bigcup\limits_{e \in s^{-1}(v)}R_e.$$
So $D_v = \bigcup\limits_{e \in s^{-1}(v)}R_e$. For ech edge $e$ we define the function $f_e: D_{r(e)} \to R_e$ by the following rule: given $\lambda_0 \in D_{r(e)}$ we have $b_{\lambda_0} \in B_{r(e)}$ and as $\varphi$ is permutative  $\varphi(S_e)(b_{\lambda_0}) \in B_e$. Therefore $\varphi(S_e)(b_{\lambda_0}) = b_{\mu_0}$ for some $\mu_0 \in R_e$. Define $f_e(\lambda_0) = \mu_0$.

As $\varphi$ is permutative, $f_e$ is surjective and since $\varphi(S_e): H_{r(e)} \to H_e$ is injective so is $f_e$; we choose $f_e^{-1}$ as the inverse of $f_e$. It's clear that $f_e$ and $f_e^{-1}$ are both measurable functions. Moreover, as $\eta$ is the counting measure and $f_e$, $f_e^{-1}$ are bijections we have $\Phi_{f_e} = 1 = \Phi_{f_e^{-1}}$. Then $(\Lambda, \eta)$ is a branching system. 

By Theorem \ref{rep} there exists a representation $\pi: C^*(E) \to B(L^2(\Lambda, \eta)) = B(l^2(\Lambda))$ such that
$$\pi(S_e)(\phi) = \chi_{R_e} . \Phi_{f_e^{-1}}^{\frac{1}{2}} . ( \phi \circ f_e^{-1} ) = \chi_{R_e} . ( \phi \circ f_e^{-1} ) \hspace{1cm}
\textrm{and}  \hspace{1cm} \pi(P_v)(\phi) = \chi_{D_v} . \phi.$$

We define $U: span  \{ b_{\lambda} \}_{\lambda \in \Lambda}  \to l^2(\Lambda)$ given by $$U(\alpha_1 b_{\lambda_1} + \ldots + \alpha_n b_{\lambda_n} ) = \alpha_1 \chi_{ \{ \lambda_1 \}} + \ldots + \alpha_n \chi_{ \{ \lambda_n \}}.$$
Its clear that $U$ is linear and as $\{b_{\lambda}\}_{\lambda \in \Lambda}$ is a orthonormal set in $H$ and $\{ \chi_{ \{\lambda\}} \}_{\lambda \in \Lambda}$ is a orthonormal set in $l^2(\Lambda)$ then $\|x\|^2 = \sum\limits_{i = 1}^{n}|\alpha_i|^2 = \|U(x)\|^2$. This shows that $U$ is an isometric operator. 
Now we can extend the operator $U$ to a operator (which we also call $U$) from $H = \overline{span ( \{ b_{\lambda} \}_{\lambda \in \Lambda} )}$ to $l^2(\Lambda)$. 
It is easy to see that $U$ is a unitary operator.




\textbf{Claim 1.} $U^* \pi(S_e) U = \varphi(S_e)$ for each $e \in E^1$.

For $b_{\lambda_0} \in B$ it holds that $\pi(S_e)U(b_{\lambda_0}) = \pi(S_e)(\chi_{ \{\lambda_0\} }) = \chi_{R_e} ( \chi_{ \{\lambda_0 \} } \circ f_e^{-1} )$, and so
$$\pi(S_e)U(b_{\lambda_0})(\mu) = \begin{cases}
1, \textrm{ if } \mu \in R_e \textrm{ and } \mu = f_e(\lambda_0).\\
0, \textrm{ otherwise.}\\
\end{cases}$$

Suppose  $b_{\lambda_0} \notin B_{r(e)}$. In this situation $\lambda_0 \notin D_{r(e)}$. If $\mu \in \Lambda$ is such that $\mu \in R_e$ e $\mu = f_e(\lambda_0)$ then $f_e^{-1}(\mu) = \lambda_0 \in D_{r(e)}$, that's a contradiction. Then $\pi(S_e)U(b_{\lambda_0}) = 0$ and $U^*\pi(S_e)U(b_{\lambda_0}) = 0$. Furthermore, $\varphi(S_e)(b_{\lambda_0}) = 0$ because if $b_{\lambda_0} \notin B_{r(e)}$, $b_{\lambda_0} \in B_v$ for some $v \neq r(e)$ or $b_{\lambda_0} \in D$. In the first case $b_{\lambda_0} \in H_v$ and then $b_{\lambda_0} = \varphi(P_v)(b_{\lambda_0})$. So,
$$\varphi(S_e)(b_{\lambda_0}) = \varphi(S_e)\varphi(P_{r(e)}) \varphi(P_v)(b_{\lambda_0}) = \varphi(S_e)\varphi(P_{r(e)} P_v)(b_{\lambda_0}) = 0.$$
In the second case $b_{\lambda_0} \in D \subseteq \left( \bigoplus\limits_{v \in E^0} H_v \right)^{\perp} \subseteq H_{r(e)}^{\perp} = Ker \,\ \varphi(P_{r(e)})$. It follows that $\varphi(S_e)(b_{\lambda_0}) = \varphi(S_e)\varphi(P_{r(e)})(b_{\lambda_0}) = 0$. In both situations it holds that
$$U^*\pi(S_e)U(b_{\lambda_0}) = 0 = \varphi(S_e)(b_{\lambda_0}).$$

Now let $b_{\lambda_0} \in B_{r(e)}$. In this case as $\varphi$ is permutative then $\varphi(S_e)(b_{\lambda_0}) = b_{\mu_0}$ for some $b_{\mu_0} \in B_e$, so $f_e(\lambda_0) = \mu_0$. Then,
$$\pi(S_e)U(b_{\lambda_0}) = \begin{cases}
1, \textrm{ if } \mu \in R_e \textrm{ and } \mu = f_e(\lambda_0).\\
0, \textrm{ otherwise. }
\end{cases}
=
\begin{cases}
1, \textrm{ if } \mu = \mu_0.\\
0, \textrm{ otherwise. }
\end{cases}
= \chi_{ \{ \mu_0 \}}.$$
Therefore,
$$U^*\pi(S_e)U(b_{\lambda_0}) = U^*(\chi_{ \{ \mu_0 \}}) = b_{\mu_0} = \varphi(S_e)(b_{\lambda_0}).$$
By linearity and continuity follows Claim 1.

\textbf{Claim 2.} $U^* \pi(P_v) U = \varphi(P_v)$ for each $v \in E^0$.

Let $b_{\lambda_0} \in B$. Then $\pi(P_v)U(b_{\lambda_0}) = \chi_{D_v} \chi_{ \{ \lambda \} }$, and so
$$U^*\pi(P_v)U(b_{\lambda_0}) = U^*(\chi_{D_v} \chi_{ \{ \lambda_0 \} }) = U^*(\chi_{ D_v \cap \{ \lambda_0 \} } ) = \chi_{B_v} (U^* (\chi_{ \{ \lambda_0 \} }) ) .$$
Moreover, $\varphi(P_v)(b_{\lambda_0})  = \chi_{B_v}(b_{\lambda_0})$ and then
$$ U^*\pi(P_v)U(b_{\lambda_0}) = \chi_{B_v} (U^*(\chi_{ \{ \lambda_0 \} })) = \chi_{B_v} (b_{\lambda_0}) = \varphi(P_v)(b_{\lambda_0}).$$
By linearity and continuity it holds that for every $h \in H$, and so $U^*\pi(P_v)U(h) = \varphi(P_v)(h)$, a so Claim 2 is proved.

Finally, since $\pi$ and $\varphi$ are homomorphisms then for every $x \in C^*(E)$ it holds that $U^*\pi(x)U = \varphi(x)$.
\end{proof}

For separable Hilbert spaces $H$, since each orthonormal total set of such spaces are countable, we get from the previous theorem the following corollary.

\begin{cor}[2.1,\cite{B}]
Let $\varphi: C^*(E) \to B(H)$ be a representation, suppose that $H$ is separable and $\varphi$ is permutative. Then exists a representation $\pi: C^*(E) \to B(l^2(\mathbb{N}))$, induced by a branching system, such that $\varphi$ and $\pi$ are unitarily equivalent.
\end{cor}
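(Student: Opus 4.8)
The plan is to deduce this corollary directly from Theorem \ref{bpb}, using only the extra structural fact available when $H$ is separable: every orthonormal set in $H$ is at most countable. So I would begin by invoking Theorem \ref{bpb} to obtain a branching system on $(\Lambda,\eta)$ (with $\eta$ the counting measure) and a representation $\pi: C^*(E)\to B(l^2(\Lambda))$ unitarily equivalent to $\varphi$. The only thing left to check is that the index set $\Lambda$ can be taken to be countable, so that $l^2(\Lambda)$ is (isometrically isomorphic to) $l^2(\mathbb{N})$; under such an identification the branching system on $\Lambda$ transports to one on a subset of $\mathbb{N}$, and the induced representation is the desired $\pi$.

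Concretely, recall from the proof of Theorem \ref{bpb} that $\Lambda$ indexes the total orthonormal set $B = B' \cup D$ of $H$, where $B' = \bigcup_{v\in E^0} B_v$ and $D$ is a total orthonormal set of $V$. Since $H$ is separable, $B$ is a total orthonormal set in a separable Hilbert space, hence countable; therefore $\Lambda$ is countable. If $\Lambda$ is finite one may pad it (for instance embed $l^2(\Lambda)$ isometrically into $l^2(\mathbb{N})$ by adjoining zero coordinates, or simply note $l^2(\Lambda)\cong l^2(\mathbb{N})$ is automatic when $\Lambda$ is countably infinite and handle the finite case by the obvious inclusion); in either case we may assume without loss of generality that $\Lambda\subseteq\mathbb{N}$. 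Then the branching system $\big((\Lambda,\eta),\{R_e\},\{D_v\},f_e,f_e^{-1},\Phi_{f_e},\Phi_{f_e^{-1}}\big)$ built in Theorem \ref{bpb} is a branching system with underlying measure space a subset of $\mathbb{N}$ with counting measure, and $L^2(\Lambda,\eta)=l^2(\Lambda)\subseteq l^2(\mathbb{N})$.

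Finally I would assemble the conclusion: by Theorem \ref{rep} this branching system induces a representation $\pi: C^*(E)\to B(l^2(\mathbb{N}))$ (extending the operators on $l^2(\Lambda)$ by zero on the orthogonal complement if $\Lambda\subsetneq\mathbb{N}$, which still satisfies the Cuntz--Krieger relations since $R_e, D_v\subseteq\Lambda$), and the unitary $U$ constructed in the proof of Theorem \ref{bpb} witnesses $U^*\pi(x)U=\varphi(x)$ for all $x\in C^*(E)$, i.e.\ $\varphi$ and $\pi$ are unitarily equivalent.

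I do not expect any genuine obstacle here: the corollary is a specialization of the theorem, and the only real content is the observation that separability forces the index set of the chosen orthonormal basis to be countable. The one mild technical point worth a sentence is the finite-$\Lambda$ case, where one must say explicitly how $l^2(\Lambda)$ sits inside $l^2(\mathbb{N})$ and why the induced representation still lands in $B(l^2(\mathbb{N}))$; this is routine and can be dispatched by padding with zeros.
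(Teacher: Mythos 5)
Your proposal is correct and follows essentially the same route as the paper: the paper also deduces the corollary directly from Theorem \ref{bpb} by observing that separability of $H$ forces the total orthonormal set indexed by $\Lambda$ to be countable, so that $l^2(\Lambda)$ may be identified with $l^2(\mathbb{N})$. Your extra remarks about the finite-$\Lambda$ case and padding with zeros are harmless elaborations of a detail the paper leaves implicit.
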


\section{Graphs whose all representations are permutative}

In this section we prove that permutative representations are unitarily equivalent to representations induced by branching systems. Now, our goal is to find a class of graphs such that every representation is permutative. For this, we need some language that was developed in \cite{B}.

\begin{defi}[3.1, 3.3: \cite{B}]
Let $E$ be a graph.
\begin{enumerate}
\item For $f \in E^1$ and $v \in E^0$ we say that $f$ and $v$ are adjacent if $r(f) = v$ or $s(f) = v$.
\item For $u,v \in E^0$ such that $u \neq v$, we say that $u$ and $v$ are adjacent if exists a edge $f \in E^1$ such that $f$ to $u$ and $f$ is adjacent to $v$.
\item For $f,g \in E^1$ with $f \neq g$, we say that $f,g$ are adjacent if there exists a vertex $v \in E^0$ such that $v$ is adjacent to $f$ and $v$ is adjacent to $g$.
\item A path between $u,v \in E^0$ is a pair $(u_0u_1 \ldots u_n, e_1e_2 \ldots e_n)$; where $u_i \in E^0$ for each $i = 0,\ldots, n$, $e_i \in E^1$ for each $i= 1, \ldots ,n$;  and:
\begin{enumerate}
\item $u_0 = u$ and $u_n = v$,
\item $e_i \neq e_j$ if $i \neq j$,
\item for each $i=1,\ldots,n$ it holds that $r(e_i) = u_{i-1}$ and $ s(e_i) = u_i$ or $r(e_i) = u_i$ and $ s(e_i) = u_{i-1}$.
\end{enumerate}
\item A cycle is a path $(u_0u_1 \ldots u_n, e_1e_2 \ldots e_n)$ such that $u_0 = u_n$.
\item We say that $E$ is $P$-simple if $E$ has not loops and for every $v,u \in r(E^1) \cup s(E^1)$ with $v \neq u$ there exists at most one path between $v$ and $u$.
\item We say that a vertex $v$ in $E$ is a extreme vertex of $E$ if $s^{-1}(v) \cup r^{-1}(v) = 1$ and $v$ is not a basis of a loop. In this case, the only edge adjacent to $v$ is called an extreme edge.
\end{enumerate}
\end{defi}

The reader can check that $E$ is $P$-simple if, and only if, $E$ has no cycles. 

Let $E$ be a graph. We define $\mathcal{E}_1$ as being the subgraph $\mathcal{E}_1 = (E^0 - X_1, E^1 - Y_1, r_1, s_1)$ of $E$ where $X_1$ is the set of extreme vertices of $E$, $Y_1$ is the set of extreme edges of $E$ and $r_1$, $s_1$ denote the restrictions of $r$ and $s$ to $E^1 - Y_1$. The vertices in $X_1$ are called the level $1$ vertices of $E$ and the edges in $Y_1$ are called the level $1$ edges of $E$.

More generally, for each $i \in \mathbb{N}$ we define $\mathcal{E}_i$ as the subgraph $\mathcal{E}_i = (E^0 - X_1 \cup \ldots \cup X_i, E^1 - Y_1 \cup \ldots \cup Y_i, r_i, s_i)$ of $\mathcal{E}_{i - 1}$ where $X_i$ is the set of extreme vertices of $\mathcal{E}_{i - 1}$, $Y_i$ is the set of extreme edges of $\mathcal{E}_{i - 1}$ and $r_i$, $s_i$ denote the restrictions of $r$ and $s$ to $E^1 - Y_1 \cup \ldots \cup Y_i$.  The vertices in $X_i$ are called the level $i$ vertices of $E$ and the edges in $Y_i$ are called the level $i$ edges of $E$.

Note that the extreme vertices (edges) of $\mathcal{E}_{i - 1}$ are exactly the level $i - 1$ vertices (edges) of $E$.

\begin{defi}[3.2: \cite{B}]
 Let $E = (E^0, E^1, r,s)$ be a graph and $V \subseteq E^0$. We say that $V$ is conneceted in $E$ if for all $u,v \in V$ there exists a path (in $E$) between $u$ e $v$. If $V = E^0$ is connected in $E$ we say that $E$ is connected.
\end{defi}

Fix a graph $E$. Given $u, v \in r(E^1) \cup s(E^1)$ we say that $u \sim v$ if $u = v$ or there exists a path between $u$ and $v$. Note that $\sim$ is a equivalence relation in $Z :=  r(E^1) \cup s(E^1)$. Let $\Delta$ be a set with exactly one member of each equivalence class. Then we can write $Z = \bigcup\limits_{v_i \in \Delta}^{.}Z_{v_i}$ where $Z_{v_i}$ denote the equivalence class of $v_i$. As consequence if $R = E^0 - Z$ then $E^0 = \bigcup\limits_{v_i \in \Delta}^{.}Z_{v_i} \bigcup\limits^{.} R$. The elements of $R$ are called isolated vertices. It's easy to see that
$$(s^{-1}(Z_v), Z_v,  s_{|s^{-1}(Z_v)}, r_{|s^{-1}(Z_v)}) = (r^{-1}(Z_v), Z_v,  s_{|r^{-1}(Z_v)}, r_{|r^{-1}(Z_v)})$$ is a subgraph of $E$.

The following proposition are very useful in the next results.

\begin{prop}\label{auxiliar}
Let $E$ be a graph. Then
\begin{enumerate}
    \item[a)] Suppose that $\mathcal{E}_i$ is defined for $i \in \mathbb{N}$. If $Z := r(E^1) \cup s(E^1)$ is connected in $E$ then $Z - X_1 \cup \ldots \cup X_i$ is connected in $\mathcal{E}_i$.
    \item[b)] [3.4: \cite{B}] If $v \in X_n$ for some $n \in \mathbb{N}$ then exists at most one vertex $w$ in $E$ such that $w$ is adjacent to $v$ and the level of $w$ is greater or equal to $n$.
    \item[c)] [3.4: \cite{B}]If $Z = \bigcup\limits_{i=1}^{m}X_i$, for some $m \in \mathbb{N}$ and $Z$ is connected then
\begin{enumerate}
\item [I)] Given $v \in X_n$ with $n < m$ exists one, and only one vertex $w$ with level greater than $n$ such that $w$ is adjacent to $v$.
\item [II)] The set $X_m$ has exactly two vertices and exists exactly one edge adjacent to the two vertices.
\end{enumerate}

\item[d)] [3.4: \cite{B}] If $Z = ( \bigcup\limits_{i=1}^{m}X_i ) \bigcup\limits^{.} \{  \overline v\} $ for each $m \in \mathbb{N}$ and $Z$ is connected then:
\begin{enumerate}
\item [I)] If $v \in X_n$ and $n < m$ then $v$ is adjacent to $\overline{v}$ or exists exactly one vertex $w$ with level greater than $n$ such that $w$ is adjacent to $v$.
\item [II)] For each $v \in X_m$ exists exactly one edge adjacent to $v$ and $\overline{v}$.
\end{enumerate}
    
\end{enumerate}
\end{prop}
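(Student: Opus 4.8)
The plan is to give a direct combinatorial proof of part (a), which is the only new assertion here; parts (b), (c), (d) are precisely Lemma~3.4 of \cite{B}, whose argument is purely graph-theoretic (no Hilbert space intervenes), so it applies verbatim in the present generality and I would simply cite it.

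For (a) I would first prove a one-step version and then iterate. The one-step statement is: for any graph $G$ with set of extreme vertices $X$ and set of extreme edges $Y$, writing $G'=(G^0\setminus X,\,G^1\setminus Y,\,r',s')$ for the subgraph obtained by deleting them, every $W\subseteq G^0$ that is connected in $G$ has $W\setminus X$ connected in $G'$. The crux is a single observation: an extreme vertex has exactly one adjacent edge, so it cannot occur as an interior vertex $u_j$ (with $0<j<n$) of a path $(u_0\cdots u_n,\,e_1\cdots e_n)$, because then $e_j$ and $e_{j+1}$ would both be adjacent to it while being distinct by the definition of a path. Consequently, given $u,v\in W\setminus X$ and a path $P$ between them in $G$ (which exists since $W$ is connected), $P$ meets no vertex of $X$ at all: interior vertices by the observation, and the endpoints $u,v$ because they lie outside $X$. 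Since every extreme edge has an extreme vertex among its endpoints and $P$ visits none of those, $P$ uses no edge of $Y$ either; hence $P$ is a path in $G'$, so $W\setminus X$ is connected in $G'$. (That $G'$ is a genuine graph is already implicit in the paper's definition of $\mathcal{E}_i$: if $e\in G^1\setminus Y$ had an extreme endpoint, then $e$ would be the unique, hence extreme, edge at that vertex, a contradiction.)

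Part (a) then follows by applying this along the chain $E=\mathcal{E}_0\supseteq\mathcal{E}_1\supseteq\cdots\supseteq\mathcal{E}_i$: by construction the extreme vertices and edges of $\mathcal{E}_{j-1}$ are exactly $X_j$ and $Y_j$, and $\mathcal{E}_j$ is $\mathcal{E}_{j-1}$ with these deleted. Starting from $W=Z=r(E^1)\cup s(E^1)$, connected in $E$ by hypothesis, $j$ applications of the one-step statement show that $Z\setminus(X_1\cup\cdots\cup X_j)$ is connected in $\mathcal{E}_j$; the case $j=i$ is the claim. Equivalently, one inducts on $i$: the base case $i=1$ is the one-step statement with $G=E$, $W=Z$, and the inductive step is the same statement with $G=\mathcal{E}_{i-1}$ and $W=Z\setminus(X_1\cup\cdots\cup X_{i-1})$, which is connected in $\mathcal{E}_{i-1}$ by the inductive hypothesis and is contained in $\mathcal{E}_{i-1}^0$.

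I do not expect a genuine obstacle. The only point needing a little care is that the one-step argument must be run not only for $E$ but for each intermediate graph $\mathcal{E}_{j-1}$, which is why it is cleanest to formulate it for an arbitrary graph from the outset; this uses nothing beyond the definition of $\mathcal{E}_j$ recalled above. Degenerate situations ($|W\setminus X|\le 1$, or $u=v$) are immediate from the definition of connectedness.
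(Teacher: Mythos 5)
Your proposal is correct and follows essentially the same route as the paper: parts (b)--(d) are cited from \cite{B}, and part (a) is proved by taking a path between two surviving vertices, noting that interior vertices meet two distinct edges and hence are not extreme, that extreme edges would force an extreme endpoint on the path, and then iterating through the graphs $\mathcal{E}_j$. Your explicit one-step lemma for an arbitrary graph is just a cleaner packaging of the induction the paper dispatches with ``the proof follows by inductive arguments.''
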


\begin{proof}
We only will prove a). The proofs of the other items can be found in \cite{B}. Suppose that $i = 1$. We will prove that $Z - X_1$ is connected in $\mathcal{E}_1$. Let $u,v \in Z - X_1$. As $Z$ is connected in $E$ then there exists a path (in $E$), $(u_0 \ldots u_p, e_1 \ldots e_p)$, between $u$ and $v$. Notice that $ \# s^{-1}(u_i) \cup r^{-1}(u_i) \geq 2$  for each $i = 2, \ldots, p - 1$. Then $u_i$ isn't a extreme vertex of $E$; therefore $u_i \in Z - X_1$ for each $i=1,\ldots,p$. Moreover, if $e_i \notin E^1 - Y_1$ then $e_i \in Y_1$ and $e_i$ is a extreme edge of $E$. Thus $s(e_i)$ or $r(e_i)$ are extreme edges of $E$, that's a contradiction. Therefore $e_i \in E^1 - Y_1$ e $(u_0 \ldots u_p, e_1 \ldots e_p)$ is a path in $\mathcal{E}_1$ between $u$ and $v$. Now the proof follows by inductive arguments.
\end{proof}

For more details about adjacency, extreme vertices and connected graphs we recommend \cite{B}. The next theorem is a generalization of [3.4 d), \cite{B}]. 

\begin{theorem}\label{ppp}
Let $E$ be a $P$-simple graph. If $Z := r(E^1) \cup s(E^1)$ is connected and there exists $n \in \mathbb{N}$ such that $\mathcal{E}_{n}$ exists and has finitely many vertices then $Z = \bigcup\limits_{i=1}^{m}X_i$ or $Z = ( \bigcup\limits_{i=1}^{m}X_i ) \bigcup\limits^{.} \{  \overline v\} $ for some $m \in \mathbb{N}$.
\end{theorem}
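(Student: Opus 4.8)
The plan is to analyze how the "level" construction $\mathcal{E}_1 \supseteq \mathcal{E}_2 \supseteq \cdots$ interacts with the hypotheses, and to argue that it must terminate in one of the two canonical shapes. Since $\mathcal{E}_n$ exists and has finitely many vertices, and since by Proposition \ref{auxiliar}a) the set $Z - X_1 \cup \ldots \cup X_n$ remains connected in $\mathcal{E}_n$, I first reduce to a finite connected $P$-simple graph, namely $\mathcal{E}_n$ (or rather the subgraph on $Z - X_1\cup\cdots\cup X_n$). On a finite connected graph with no cycles (a tree), the stripping of extreme vertices and edges must eventually exhaust all vertices: a finite tree with at least two vertices always has a leaf, so $X_{n+1}, X_{n+2}, \ldots$ are nonempty until the graph is reduced to either a single vertex or the empty graph. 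The first task is therefore to show that $\mathcal{E}_n$ restricted to $Z-X_1\cup\cdots\cup X_n$ is a finite forest, hence a finite tree by connectedness, and that trees have extreme vertices at every nonterminal stage.

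Next I would run the level construction to its end. Starting from $\mathcal{E}_n$, each successive $\mathcal{E}_{n+j}$ strictly decreases the (finite) number of vertices as long as that number exceeds one, because a finite tree with $\geq 2$ vertices has at least one leaf (in fact at least two), and leaves that are not bases of loops — there are none, since $E$ is $P$-simple hence loop-free — are exactly extreme vertices. So after finitely many steps we reach $\mathcal{E}_m$ with either zero or one vertex in $Z$. In the first case $Z = \bigcup_{i=1}^m X_i$ exactly; in the second, the lone surviving vertex $\overline v$ is never an extreme vertex of any $\mathcal{E}_{m-1}$ (a one-vertex loop-free graph has an isolated vertex, not an extreme vertex, since extreme requires $s^{-1}(v)\cup r^{-1}(v)$ to have one element — an edge — which would force a second endpoint), so it is simply left over: $Z = (\bigcup_{i=1}^m X_i) \sqcup \{\overline v\}$. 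Care must be taken here with the definition of extreme vertex ($\# (s^{-1}(v)\cup r^{-1}(v)) = 1$, one adjacent edge) versus isolated, so that the terminal vertex is correctly classified as the $\{\overline v\}$ case rather than being strippable.

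The bookkeeping detail that needs attention is that the levels $X_1, \ldots, X_n$ produced before reaching $\mathcal{E}_n$ may already have consumed vertices outside $Z$ or parts of $Z$, so the final statement "$Z = \bigcup_{i=1}^m X_i$" is about the $X_i$ as subsets of $E^0$; I must check that every vertex of $Z$ is either stripped at some finite level or is the unique survivor, and that no vertex of $Z$ can persist forever. This is where connectedness of $Z$ in each $\mathcal{E}_i$ (Proposition \ref{auxiliar}a)) is essential: it guarantees the relevant subgraph stays a single tree, so the leaf-stripping process cannot stall on a component of size $\geq 2$. I expect the main obstacle to be precisely this — verifying rigorously that the finite tree cannot reach a "stable" nonempty state with more than one vertex, i.e. that a finite tree on $\geq 2$ vertices always has an extreme (leaf) vertex in the precise sense of the paper's definition, and handling the edge cases where an edge is its own "multi-edge" partner or where $r(E^1)\cup s(E^1)$ omits isolated vertices of $E^0$.
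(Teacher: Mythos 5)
Your argument is correct, and it rests on the same combinatorial core as the paper's proof (connectedness of the surviving part of $Z$ via Proposition \ref{auxiliar}a), acyclicity from $P$-simplicity, and finiteness of the vertex set from level $n$ on), but it is organized differently. You run the stripping forward: from $\mathcal{E}_n$ onward the $Z$-part is a finite tree, a finite tree on at least two vertices has a leaf, leaves are extreme vertices (no loops), so each level strictly decreases the vertex count until zero or one $Z$-vertex remains, and a lone survivor must be isolated, hence is the $\overline v$ of the statement. The paper instead argues by contradiction at the terminal stage: it takes $m$ to be the largest level for which $\mathcal{E}_m$ is defined, notes that then $\mathcal{E}_m$ has no extreme vertices, shows any leftover vertex of $Z$ must be isolated by building a path of pairwise distinct vertices (impossible in a finite, cycle-free graph if some leftover vertex had degree at least $2$), and then uses connectedness to conclude there is at most one leftover vertex. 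The two routes prove the same leaf-existence fact in contrapositive versus direct form; your version has the advantage of making explicit the termination of the level construction, which the paper's ``biggest $m$'' implicitly presupposes. Two small points you should still pin down: every edge surviving in $\mathcal{E}_i$ has both endpoints surviving (an extreme vertex is removed together with its unique adjacent edge), which is what forces the lone survivor to be isolated rather than merely ``extreme-free''; and $X_i\subseteq Z$ for all $i$ (extreme vertices have an adjacent edge), so that the conclusion $Z=\bigcup_{i=1}^m X_i$ (possibly together with $\{\overline v\}$) is exactly an identity of subsets of $Z$. Both are routine and do not affect the validity of your approach.
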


\begin{proof}
Let $n_0$ the smallest number such that $\mathcal{E}_{n_0}$ has finite vertices. Let $m$ the biggest number such that $\mathcal{E}_m$ is defined. Suppose that $Z := r(E^1) \cup s(E^1) \neq \bigcup\limits_{i = 1}^{m}X_i$. As $\bigcup\limits_{i = 1}^{m}X_i \subseteq Z$ then there exists $\overline{v} \in Z - \bigcup\limits_{i = 1}^{m}X_i$. So $\overline{v}$ is a vertex of $\mathcal{E}_{m}$ and we can suppose that $\mathcal{E}_{m}$ has $N$ ($N \in \mathbb{N}$) vertices.

We claim that $\overline{v}$ is a isolated vertex of the graph $\mathcal{E}_m$. Otherwise, there exists an edge $e_1$ in the graph $\mathcal{E}_m$ such that $e_1$ is adjacent to $\overline{v}$. Suppose that $r(e_1) = \overline{v}$. Of course $s(e_1) \neq \overline{v}$ because $E$ is $P$-simple. Let $v_0 = s(e_1)$ and as $\mathcal{E}_m$ is a subgraph, $v_0$ is a vertex of $\mathcal{E}_m$.

As $\overline{v}$ is not a extreme vertex of $\mathcal{E}_m$ then $\# s^{-1}(\overline{v}) \cup r^{-1}(\overline{v}) \geq 2$. Let $e_2$ be another edge adjacent to $\overline{v}$ in $\mathcal{E}_m$. Without loss of generality, we assume $s(e_2) = \overline{v}$ and $r(e_2) = v_2$. Note that $v_2 \neq v_0$, $v_2 \neq \overline{v}$. Proceeding inductively we get a contradiction because $\mathcal{E}_m$ has finite vertices.

\begin{center}
\begin{tikzpicture}[->,auto,node distance=2.5cm,
                   thick]
\tikzset{every state/.style={minimum size=0pt}} 
\tikzset{every loop/.style={min distance=10mm,in=0,out=80,looseness=20}} 

\node[state,inner sep=0.5pt,draw=none] (A)   {$v_0$};
\node[state,inner sep=0.5pt,draw=none] (B) [right of =A]   {$\overline{v}$};
\node[state,inner sep=0.5pt,draw=none] (C) [right of =B]   {$v_2$};
\node[state,inner sep=0.5pt,draw=none] (D) [right of =C]   {$v_3 ...$};

\path (A) edge  node[above]{$e_1$} (B)
(B) edge  node[above]{$e_2$} (C)
(C) edge  node[above]{$e_3$} (D);
\end{tikzpicture}
\end{center}

Finally, suppose that $Z - \bigcup\limits_{i = 1}^{m}X_i$ has two or more elements. By the claim above the vertices are isolated in the graph $\mathcal{E}_m$ but this contradicts the fact that $\mathcal{E}_m$ is connected. Therefore $Z - \bigcup\limits_{i = 1}^{m}X_i$ has exactly one element.

\end{proof}

\begin{theorem}\label{gbpb}
Let $E = (E^0, E^1, r,s)$ a $P$-simple graph. Suppose that $Z := r(E^1) \cup s(E^1)$ is connected and suppose that there exists $n \in \mathbb{N}$ such that the graph $\mathcal{E}_{n}$ exists and has finitely many vertices. If $\varphi: C^*(E) \to B(H)$ is a representation then $\forall v \in E^0$ and $\forall e \in E^1$ then there exists total orthonormal sets $B_v$ and $B_e$ from $H_v$ and $H_e$, respectively, such that:
\begin{enumerate}
\item If $e \in s^{-1}(v)$ then $B_e \subseteq B_v$ and if $0 < \# s^{-1}(v) < \infty$ then $B_v = \bigcup\limits_{e \in s^{-1}(v)} B_e$.
\item If $e \in r^{-1}(v)$ then $\varphi(S_e)(B_v) = B_e$.
\end{enumerate}
\end{theorem}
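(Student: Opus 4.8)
The strategy is to build the orthonormal sets $B_v$ and $B_e$ by a double induction: an outer induction descending through the level decomposition $E^0 = X_1 \cup X_2 \cup \cdots$ produced by the peeling operation $\mathcal{E}_{i-1} \mapsto \mathcal{E}_i$, using Theorem \ref{ppp} to know that $Z = \bigcup_{i=1}^m X_i$ or $Z = (\bigcup_{i=1}^m X_i) \cup \{\overline{v}\}$, together with the structural facts in Proposition \ref{auxiliar} c), d). First I would reduce to the connected component of interest: a general representation of $C^*(E)$ restricts to the sub-$C^*$-algebra generated by the vertices and edges of $Z$, and the isolated vertices in $R = E^0 - Z$ contribute orthogonal summands $H_v$ on which one just picks any total orthonormal set with no compatibility constraint, so it suffices to treat the connected graph on $Z$. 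Then I would handle the two cases of Theorem \ref{ppp} in parallel.

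First the base case: the top level $X_m$ (the vertices surviving all the peeling) consists of exactly two vertices joined by exactly one edge (case $Z = \bigcup X_i$, via Proposition \ref{auxiliar} c) II)), or of several vertices each joined by a single edge to the lone remaining vertex $\overline{v}$ (case with $\overline{v}$, via d) II)). Because these top vertices are not sources of infinitely many edges inside $\mathcal{E}_m$ — actually in the two-vertex case the graph $\mathcal{E}_m$ is a single edge — the Cuntz–Krieger relations pin down the subspaces $H_v, H_e$ up to the unitaries $\varphi(S_e)$, and I can simply choose a total orthonormal set on whichever vertex is the "deepest" and transport it across $\varphi(S_e)$ to define $B_e$ and hence $B$ on the adjacent vertex. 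The key point making this work at the top level is that there is essentially one edge, so no conflict between the "union over $s^{-1}(v)$" requirement and the "$\varphi(S_e)$-image" requirement can arise.

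Next the inductive step: suppose $B_w, B_f$ have been chosen compatibly for all vertices $w$ of level $> n$ and all edges among them. Take $v \in X_n$. By Proposition \ref{auxiliar} c) I) (resp. d) I)) there is exactly one vertex $w$ adjacent to $v$ of level $> n$ — call the connecting edge $g$ — while every other edge adjacent to $v$ leads to a vertex of level $\le n$, i.e. to a vertex not yet treated. So $B_v$ is already constrained on the piece $H_g \subseteq H_v$ coming from the already-defined side (either $B_g \subseteq B_v$ if $s(g) = v$, or $B_v \supseteq \varphi(S_g)^{-1}(B_g)$... more precisely $B_v = \varphi(S_g)^*(B_g)$ if $r(g) = v$): in either case this determines a partial orthonormal set inside $H_v$ on a single distinguished orthogonal summand. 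I then extend it to a total orthonormal set $B_v$ of $H_v$: if $v$ has $0 < \#s^{-1}(v) < \infty$ I must arrange $B_v = \bigcup_{e \in s^{-1}(v)} B_e$, so I decompose $H_v = \bigoplus_{e \in s^{-1}(v)} H_e$, put the already-chosen vectors into the appropriate $B_e$'s (the $P$-simple hypothesis — no cycles — guarantees at most one of the $s^{-1}(v)$ edges is the special edge $g$, so there is no clash), and choose arbitrary total orthonormal sets for the remaining $B_e$; if $\#s^{-1}(v) = \infty$ or $v$ is a sink I extend freely over the complementary summand $V_v \oplus (\text{untreated } H_e)$. Finally, for every edge $e$ with $r(e) = v$ and $e$ not the special edge $g$ (these are precisely the edges into $v$ coming "from below"), I \emph{define} $B_e := \varphi(S_e)(B_v)$, which is automatically a total orthonormal set of $H_e$ since $\varphi(S_e)\colon H_v \to H_e$ is unitary; this is exactly condition 2 for those edges, and condition 2 for $g$ holds by the inductive hypothesis / base choice.

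\textbf{The main obstacle.} The delicate point is bookkeeping consistency: a given edge $e$ is adjacent to two vertices, one of which gets processed strictly before the other, and I must make sure that whichever choice "fires first" ($B_e \subseteq B_{s(e)}$ as part of building $B_{s(e)}$, versus $B_e = \varphi(S_e)(B_{r(e)})$ when building $B_{r(e)}$) does not get overwritten or contradicted when the second vertex is processed — and that condition 1 (a coherent partition of $B_{s(e)}$ into the $B_e$'s when $s^{-1}(s(e))$ is finite) survives. This is where the $P$-simplicity (absence of cycles) and Proposition \ref{auxiliar} b),c),d) do the real work: they guarantee that along any adjacency chain the level is strictly monotone except at the single "summit" edge, so each edge has a well-defined earlier endpoint and a well-defined later endpoint, and the set $B_e$ is assigned exactly once — at the later endpoint if that endpoint's vertex is the one whose level is used to orient things, and the earlier endpoint's constraint is then automatically satisfied because the transport map is the same $\varphi(S_e)$ in both descriptions (using that $\varphi(S_e)^*\colon H_e \to H_{r(e)}$ is the inverse unitary). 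I would make this rigorous by fixing, before the induction, an orientation of every edge of $Z$ pointing from its deeper endpoint to its shallower one (well-defined by the strict level monotonicity, with the unique summit edge oriented either way), processing vertices in order of increasing level, and at vertex $v$ only ever \emph{reading} the $B_e$ for the unique incoming-oriented edge and \emph{writing} the $B_e$ for all outgoing-oriented edges; a short check that $H_v$ decomposes as the incoming summand $\oplus$ (finite/infinite) outgoing summands $\oplus$ possible $V_v$ then closes the argument.
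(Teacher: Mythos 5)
There is a genuine gap in your inductive step, and it is exactly the point the paper's two-pass construction is designed to avoid. You claim that when you reach a vertex $v\in X_n$, the already-processed side only constrains ``a partial orthonormal set inside $H_v$ on a single distinguished orthogonal summand''. That is true only when the connecting edge $g$ satisfies $s(g)=v$ (then $H_g$ is a summand of $H_v$ and $B_g$ is a partial basis). When instead $r(g)=v$, condition 2 forces the \emph{entire} set $B_v=\varphi(S_g)^*(B_g)$, because $\varphi(S_g)\colon H_v\to H_g$ is unitary (note $H_g\subseteq H_{s(g)}$, not $H_v$). Nothing guarantees that this forced basis is adapted to the decomposition $H_v\supseteq\bigoplus_{e\in s^{-1}(v)}H_e$: for instance, if $s^{-1}(v)=\{e_1,e_2\}$ with $H_{e_1},H_{e_2}$ one-dimensional, the transported basis may consist of two unit vectors making a $45^{\circ}$ angle with $H_{e_1}$, and then no subsets $B_{e_1},B_{e_2}\subseteq B_v$ can be bases of $H_{e_1},H_{e_2}$, so condition 1 is unsatisfiable. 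The same defect reappears one step later in the other case: the ``arbitrary'' total orthonormal sets $B_e$ you assign to the untreated edges $e\in s^{-1}(v)$ force $B_{r(e)}=\varphi(S_e)^*(B_e)$ when $r(e)$ is processed, and again this forced basis need not contain total orthonormal subsets of the spaces $H_f$, $f\in s^{-1}(r(e))$. Your closing remark that ``the earlier endpoint's constraint is automatically satisfied because the transport map is the same $\varphi(S_e)$'' conflates the two conditions: condition 2 is a transport statement across $e$, while condition 1 is a partition statement at $s(e)$ involving the \emph{other} edges emitted by $s(e)$, and it is the latter that breaks.

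The structural reason is that along each edge $e$ the data must flow from $r(e)$ to $s(e)$: one must first assemble $B_{r(e)}$ (from the sets attached to $s^{-1}(r(e))$, or freely), then define $B_e:=\varphi(S_e)(B_{r(e)})$, and only afterwards incorporate $B_e$ into $B_{s(e)}$. This orientation is dictated by $r$ and $s$ and is unrelated to the level ordering, so no single level-monotone sweep (descending, as your induction is set up, or ascending, as your last paragraph says — the two are not even consistent with each other) can respect it at every vertex. This is precisely why the paper's proof splits each level into the vertices whose special edge points into them (the sets $X_i^{VF}$) and those it points out of (the $X_i^{VI}$), runs an ascending pass over the former (assembling $B_v$ from below and pushing along $r^{-1}(v)$), then a descending pass over the latter, plus an extra step for the exceptional vertex $\overline v$ of Theorem \ref{ppp}, and proves the intermediate Claims (via $P$-simplicity and Proposition \ref{auxiliar}) that the later steps never overwrite the earlier choices. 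Your proposal would have to be reorganized along these lines to close the gap.
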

\begin{proof}
The proof of this theorem consists in two inductive processes in the level of the vertices. By the previous theorem, $Z = \bigcup\limits_{i = 1}^{m}X_i$ or $Z = \bigcup\limits_{i = 1}^{m}X_i \cup \{\overline{v}\}$. Suppose that $Z = \bigcup\limits_{i = 1}^{m}X_i$.

\textbf{Step 1.} For each $v \in X_1^{VF}$ we choose a total orthonormal set $B_v \subseteq H_v$. For each 
$e \in r^{-1}(v)$ we define $B_e := \pi(S_e)(B_v)$. For all the other vertices and edges we choose arbitrary orthonormal total subsets $B_v \subseteq H_v$ and $B_e \subseteq H_e$. Then the conditions 1 and 2 are satisfied for all $v \in X_1^{VF}$ and for all $e \in E^1$ such that $r(e) \in X_1^{VF}$.

\textbf{Step 2.} Let $v \in X_2^{VF}$. If $s^{-1}(v) = \emptyset$ we choose $\overline{B}_v := B_v$, if $0 < \# s^{-1}(v) < \infty$ we choose $\overline{B}_v := \bigcup\limits_{e \in s^{-1}(v)} B_e$ and if $\# s^{-1}(v) = \infty$ we define $\overline{B}_v$ as a total orthonormal set of $H_v$ such that $B_e \subseteq \overline{B}_v$ for each $e \in s^{-1}(v)$. Thus, if $e \in r^{-1}(v)$ define $\overline{B}_e := \pi(S_e)(\overline{B}_v)$. For the other vertices and edges define $\overline{B_v} := B_v$ e $\overline{B_e} := B_e$.

\textbf{Claim 2.}: If $e \in r^{-1}(v)$ and $v \in X_2^{VF}$ then $r(e) \notin X_1^{VF}$ and $s(e) \notin X_2^{VF}$. 

Of course $r(e) \notin X_1^{VF}$. If $s(e) \in X_2^{VF}$ then $s(e)$ is a vertex of the graph $\mathcal{E}_1$ as well as $r(e)$; so $e$ is a edge of this graph. As $s(e) \in X_2^{VF}$ then there exists a vertex $w$ with level greater than $2$ and a edge $f$ in the graph $\mathcal{E}_1$ such that $r(f) = s(e)$. Note that $e \neq f$. Then $s(e)$ is a adjacent to $e$ and $f$ in the graph $\mathcal{E}_1$. That is a contradiction because $s(e)$ is a extreme vertex of $\mathcal{E}_1$. Therefore $s(e) \notin X_2^{VF}$.

The claim above ensures that the \textbf{Step 2} doesn't modifies the previous choices. Thus, after the \textbf{Step 2} we get total orthonormal sets $\overline{B_v}$ ($v \in E^0$) e $\overline{B_e}$ ($e \in E^1)$ such that 1 and 2 are satisfied for all $v \in X_1^{VF} \cup X_2^{VF}$.

We proceed inductively until the step $m - 1$. So, we get total orthonormal sets $B_v$ e $B_e$ such that $1$ and $2$ are satisfied for all $v \in X_1^{VF} \cup \ldots \cup X_{m - 1}^{VF}$.

\textbf{Step m.} Let $v \in X_m^{VF}$. If $s^{-1}(v) = \emptyset$ we define $\overline{B_v} := B_v$, if $0 < \# s^{-1}(v) < \infty$ we define $\overline{B_v} := \bigcup\limits_{e \in s^{-1}(v)} {B_e}$ and if $\# s^{-1}(v) = \infty$ we choose $\overline{B_v}$ a orthonormal total set of $H_v$ such that $B_e \subseteq \overline{B_v}$ for each $e \in s^{-1}(v)$. If $e \in r^{-1}(v)$ define $\overline{B_e} := \pi(S_e)(\overline{B_v})$. For the all the other vertices and edges we define $\overline{B_v} := B_v$ e $\overline{B_e} := B_e$.

\textbf{Claim m.} If $e \in r^{-1}(v)$ and $v \in X_m^{VF}$ then $r(e) \notin X_1^{VF} \cup \ldots \cup X_{m - 1}^{VF}$ and $s(e) \notin X_2^{VF} \cup \ldots \cup X_{m}^{VF}$. 

Of course $r(e) \not\in X_1^{VF} \cup \ldots \cup X_{m - 1}^{VF}$. Suppose that $s(e) \in X_i^{VF}$ for some $i =2, \ldots, m$. If $2 \leq i \leq m - 1$ the argument is the same as in \textbf{Claim 2.} If $i = m$ then $s(e) \in X_m^{VF}$ as well as $r(e)$, thus $r(e)$, $s(e)$ are distinct vertices of $\mathcal{E}_{m - 1}$ and $e$ is a edge of this graph. As $s(e)$ is a final vertex of $X_m$ exists a edge $f$ in this graph such that $r(f) = s(e)$. Of course $f \neq e$. As $X_m$ has exactly two elements (Proposition \ref{auxiliar}) then $s(f) = s(e)$ or $s(f) = r(e)$. In both cases we get a cycle, which is impossible, since $E$ is $P$-simple.

After this step we get total orthonormal sets $\overline{B_v}$ and $\overline{B_e}$ such that 1 and 2 are satisfied for all $v \in X_1^{VF} \cup \ldots \cup X_m^{VF}$.

\textbf{Step m + 1.} Let $v \in X_m^{VI}$. If $0 < \# s^{-1}(v) < \infty$ we define $\overset{\sim}{B_v} := \bigcup\limits_{e \in s^{-1}(v)}{\overline{B_e}}$ and if $\# s^{-1}(v) = \infty$ we choose $\overset{\sim}{B_v}$ a orthonormal total set of $H_v$ such that $\overline{B_e} \subseteq \overset{\sim}{B_v}$ for each $e \in s^{-1}(v)$. If $e \in r^{-1}(v)$ define $\overset{\sim
}{B_e} = \pi(S_e)(\overset{\sim}{B_v})$. For all the other vertices and edges define $\overset{\sim}{B_v} := \overline{B_v}$ e $\overset{\sim}{B_e} := \overline{B_e}$.

With analogue arguments as used above we conclude that this construction don't change our previous choices. Now, with inductive arguments we conclude the proof in the case $Z = \bigcup\limits_{i = 1}^{m}X_i$. 

Now, suppose that $Z = \bigcup\limits_{i = 1}^{m}X_i \bigcup\limits^{.} \overline{v}$. The steps $1$ until $m$ are the same. After this, we need a extra step as follows:

\textbf{Extra Step.}
We need deal with $\overline{v}$ in a extra step because $\overline{v}$ has not level. The argument is similar as in the other steps. If $s^{-1}(\overline{v}) = \emptyset$ define $\overset{\smile}{{B}_{\overline{v}}} := \overline{B}_{\overline{v}}$; if $0 < \#s^{-1}(\overline{v}) < \infty$ define $\overset{\smile}{{B}_{\overline{v}}} := \bigcup\limits_{e \in s^{-1}(\overline{v})} \overline{B}_e$ and if $\#s^{-1}(\overline{v}) = \infty$ we choose $\overset{\smile}{{B}_{\overline{v}}}$ such that $\overline{B}_e \subseteq \overset{\smile}{{B}_{\overline{v}}}$ for every $e \in s^{-1}(\overline{v})$. For each $e \in r^{-1}(\overline{v})$ define $\overset{\smile}{B_e} := \varphi(S_e)(\overset{\smile}{B_{\overline{v}}})$ and for the others vertices and edges define $\overset{\smile}{B_v} := \overline{B_v}$ and $\overset{\smile}{B_e} := \overline{B_e}$.

\textbf{Extra Claim.} If $e \in r^{-1}(\overline{v})$ then $r(e) \notin X_1^{VF} \cup \ldots \cup X_m^{VF}$ and $s(e) \notin X_2^{VF} \cup \ldots \cup X_m^{VF}$.

Of course $r(e) \notin X_1^{VF} \cup \ldots \cup X_m^{VF}$ and if $s(e) \in X_2^{VF} \cup \ldots \cup X_{m - 1}^{VF}$ we repeat the previous arguments (that's possible because $\overline{v} \in \mathcal{E}_i$ for each $i = 1, \ldots, m - 1$). If $s(e) \in X_m^{VF}$ then there exists an edge $f$ in the graph $\mathcal{E}_{m - 1}$ such that $r(f) = s(e)$ and $s(f) = \overline{v}$, then $(\overline{v} s(e) s(f), ef)$ is a cycle. That's a contradiction.

The next steps are the same as in the case $Z = \bigcup\limits_{i = 1}^{m}X_i$. So we conclude the case $Z = \bigcup\limits_{i = 1}^{m}X_i \bigcup\limits^{.}\{ \overline{v} \}$ and the proof of the theorem.
\end{proof}

\begin{cor}\label{everybpb}
Let $E = (E^0, E^1, r,s)$ be $P$-simple graph. Suppose that $Z := r(E^1) \cup s(E^1)$ is connected and exists $n \in \mathbb{N}$ such that the graph $\mathcal{E}_{n}$ exits and has finitely many vertices. Then every representation $\varphi: C^*(E) \to B(H)$ is unitarily equivalent to a representation arising from a $E$-branching system. 
\end{cor}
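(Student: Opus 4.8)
\textbf{Proof proposal for Corollary \ref{everybpb}.}

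The plan is to combine Theorem \ref{gbpb} with Theorem \ref{bpb}, the essential observation being that the output of Theorem \ref{gbpb} is precisely a witness that $\varphi$ is permutative. First I would recall that a representation $\varphi: C^*(E) \to B(H)$ is permutative exactly when there exist total orthonormal sets $B_e \subseteq H_e$ and $B_v \subseteq H_v$ satisfying (i) $B_e \subseteq B_v$ whenever $e \in s^{-1}(v)$, (ii) $B_v = \bigcup_{e \in s^{-1}(v)} B_e$ whenever $0 < \#s^{-1}(v) < \infty$, and (iii) $\varphi(S_e)(B_{r(e)}) = B_e$. The first two conditions are literally item 1 of Theorem \ref{gbpb}. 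For the third condition I would note that item 2 of Theorem \ref{gbpb} gives $\varphi(S_e)(B_v) = B_e$ for every $e \in r^{-1}(v)$; rephrasing, for an edge $e$ with $r(e) = v$ we get $\varphi(S_e)(B_{r(e)}) = B_e$, which is exactly condition (iii). Hence the hypotheses on $E$ in Corollary \ref{everybpb} — being $P$-simple, $Z := r(E^1)\cup s(E^1)$ connected, and some $\mathcal{E}_n$ with finitely many vertices — force every representation $\varphi: C^*(E) \to B(H)$ to be permutative by Theorem \ref{gbpb}.

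Once permutativity is established, I would simply invoke Theorem \ref{bpb}: since $\varphi$ is permutative, there is a representation $\pi: C^*(E) \to B(l^2(\Lambda))$ arising from an $E$-branching system such that $\varphi$ and $\pi$ are unitarily equivalent. This is exactly the conclusion of the corollary, so nothing further is needed. In writing this up I would be careful to match the indexing conventions: Theorem \ref{gbpb} is stated with the quantifier "$\forall v \in E^0$ and $\forall e \in E^1$" over the sets $B_v, B_e$, and I should make clear that the single global family $\{B_v\}_{v\in E^0}, \{B_e\}_{e\in E^1}$ produced there is the one fed into the definition of permutative representation — in particular that the $B_e$ attached to an edge $e$ is used both in condition (i)/(ii) relative to $s(e)$ and in condition (iii) relative to $r(e)$, with no conflict.

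I do not expect a genuine obstacle here, since this corollary is a formal consequence of the two theorems; the only point demanding a little care is the bookkeeping of which total orthonormal sets play which role, i.e. verifying that the reformulation $\varphi(S_e)(B_{r(e)}) = B_e$ of item 2 of Theorem \ref{gbpb} coincides with the third bullet of the definition of permutative. If one wanted to be fully explicit, one could add one sentence observing that item 2 ranges over all edges $e$ (every edge $e$ has a range vertex $r(e)$, and $e \in r^{-1}(r(e))$), so the identity $\varphi(S_e)(B_{r(e)}) = B_e$ holds for every $e \in E^1$, matching the definition verbatim.

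\begin{proof}
By Theorem \ref{gbpb}, for every $v \in E^0$ and every $e \in E^1$ there exist total orthonormal sets $B_v \subseteq H_v$ and $B_e \subseteq H_e$ such that $B_e \subseteq B_v$ whenever $e \in s^{-1}(v)$, $B_v = \bigcup_{e \in s^{-1}(v)} B_e$ whenever $0 < \#s^{-1}(v) < \infty$, and $\varphi(S_e)(B_v) = B_e$ whenever $e \in r^{-1}(v)$. Since every edge $e \in E^1$ satisfies $e \in r^{-1}(r(e))$, the last property reads $\varphi(S_e)(B_{r(e)}) = B_e$ for every $e \in E^1$. Therefore the families $\{B_v\}_{v \in E^0}$ and $\{B_e\}_{e \in E^1}$ witness that $\varphi$ is a permutative representation. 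By Theorem \ref{bpb}, there exists a representation $\pi: C^*(E) \to B(l^2(\Lambda))$ arising from an $E$-branching system such that $\varphi$ and $\pi$ are unitarily equivalent.
\end{proof}
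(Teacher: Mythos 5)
Your proposal is correct and follows exactly the paper's route: the paper's proof is simply "It follows from Theorems \ref{gbpb} and \ref{bpb}," and you have spelled out the (straightforward) verification that the conclusion of Theorem \ref{gbpb} is precisely the definition of a permutative representation, after which Theorem \ref{bpb} gives the unitary equivalence.
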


\begin{proof}
It follows from Theorems \ref{gbpb} e \ref{bpb}.
\end{proof}

\begin{exem}
Due to the previous corollary every representation of the two graphs below is unitary equivalent to a representation arising from a branching system

\begin{figure}[!htb]
\centering
\begin{minipage}{0.5\textwidth}
        \centering
        \begin{tikzpicture}[->,auto,node distance=1.3 cm,
                    thick]
\tikzset{every state/.style={minimum size=0pt}} 

\node[state,inner sep=0.5pt,draw=none] (A){$v_1$};
\node[state,inner sep=0.5pt,draw=none] (B)[below of =A] {$v_3 $};
\node[state,inner sep=0.5pt,draw=none] (C)[left of =B] {$v_2$};
\node[state,inner sep=0.5pt,draw=none] (D)[right of =B] {$v_{N}$};
\node[state,inner sep=0.5pt,draw=none] (E)[right of =D] {$...$};

\path 
(B) edge  node{$e_3$} (A)
(C) edge  node{$e_2$} (A)
(D) edge  node{$...$}node[right]{$e_{N}$} (A)
(E) edge  node[right]{$...$}  (A);

\end{tikzpicture}
    
    \end{minipage}%
    \begin{minipage}{0.5\textwidth}
        \centering
        \begin{tikzpicture}[->,auto,node distance=1cm,
                    thick]
\tikzset{every state/.style={minimum size=0pt}} 
\tikzset{every loop/.style={min distance=10mm,in=0,out=80,looseness=20}} 

\node[state,inner sep=0.5pt,draw=none] (A)   {$v_2$};
\node[state,inner sep=0.5pt,draw=none] (B) [right of =A]   {$v_3$};
\node[state,inner sep=0.5pt,draw=none] (C) [above right of =B]   {$v_4$};
\node[state,inner sep=0.5pt,draw=none] (D) [below right of =B]   {$v_5$};
\node[state,inner sep=0.5pt,draw=none] (F) [left of = A]   {$v_1$};

\path (A) edge  node{$e_0$} (B)
(C) edge  node{$e_1$} (B)
(B) edge  node{$e_2$} (D)
(A) edge node{$e_{-1}$} (F);

\end{tikzpicture}
    \end{minipage}%
    \end{figure}

\end{exem}

Given a graph $E = (E^0, E^1,r,s)$ we know that $E^0 = \left( \bigcup\limits_{v_i \in \Delta}^{.} Z_{v_i} \right) \bigcup\limits^{.} R$. Let us consider the connected subgraphs $E^{v_i} = (Z_{v_i}, s^{-1}(Z_{v_i}), r_{|s^{-1}(Z_{v_i})}, s_{|s^{-1}(Z_{v_i})} )$. 

\begin{cor}\label{last}
Let $E = (E^0, E^1,r,s)$ be a $P$-simple graph. Suppose that for each $i \in I$ there exists $n_i \in \mathbb{N}$ such that the graph $\mathcal{E}_{n_i}^{v_i}$ has finitely many vertices. Then, every representation $\varphi: C^*(E) \to B(H)$ of $C^*(E)$ is unitarily equivalent to a representation arising from a $E$-branching system.
\end{cor}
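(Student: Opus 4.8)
The plan is to decompose the graph $E$ into its connected pieces and the isolated vertices, realize a representation of $C^*(E)$ as a ``direct sum'' of representations of the smaller algebras, apply Corollary~\ref{everybpb} to each connected piece $E^{v_i}$, handle the isolated vertices trivially, and then glue the resulting branching systems into a single $E$-branching system whose induced representation is unitarily equivalent to $\varphi$. First I would use the decomposition $E^0 = \left( \bigcup_{v_i \in \Delta}^{.} Z_{v_i} \right) \bigcup^{.} R$ together with the observation that each $E^{v_i}$ is a connected subgraph with $Z := r(E^1)\cup s(E^1)$ connected (indeed $Z$ for $E^{v_i}$ is exactly $Z_{v_i}$), so that the hypothesis of Corollary~\ref{everybpb} is met by each $E^{v_i}$; the isolated vertices in $R$ contribute no edges and only mutually orthogonal projections.

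The key technical step is the ``block decomposition'' of $\varphi$. Since the projections $\{\varphi(P_v)\}_{v\in E^0}$ are mutually orthogonal and the partial isometries $\{\varphi(S_e)\}$ have orthogonal ranges, for each $i\in I$ the subspace $H^{v_i} := \bigoplus_{v\in Z_{v_i}} H_v$ is invariant under all $\varphi(P_v)$ and $\varphi(S_e)$ with $v,s(e),r(e)\in Z_{v_i}$, and for $v\in Z_{v_i}$ there are no edges $e$ with $s(e)\in Z_{v_i}$ but $r(e)\notin Z_{v_i}$ (nor conversely), by definition of the equivalence classes. Hence restricting $\varphi$ to $H^{v_i}$ gives a representation $\varphi^{v_i}: C^*(E^{v_i}) \to B(H^{v_i})$ of each connected piece; similarly each $v\in R$ gives the one-dimensional data $\varphi(P_v)$ on $H_v$. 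By part (1) of the earlier Proposition, $\varphi$ vanishes on the complement $V = \left(\bigoplus_{v\in E^0} H_v\right)^{\perp}$, so up to unitary equivalence $H = \left(\bigoplus_i H^{v_i}\right) \oplus \left(\bigoplus_{v\in R} H_v\right) \oplus V$ and $\varphi = \left(\bigoplus_i \varphi^{v_i}\right)\oplus(\text{diagonal projections on }R)\oplus 0$.

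Next I would apply Corollary~\ref{everybpb} to each $\varphi^{v_i}$ to obtain an $E^{v_i}$-branching system $(X_i, \mathcal{M}_i, \mu_i)$ with collections $\{R_e^i\}_{e}$, $\{D_v^i\}_{v\in Z_{v_i}}$ and functions $f_e^i$, and a unitary $U_i: H^{v_i} \to L^2(X_i,\mu_i)$ intertwining $\varphi^{v_i}$ with the induced representation $\pi^{v_i}$. I then assemble the disjoint union $X := \left(\bigsqcup_i X_i\right) \sqcup \left(\bigsqcup_{v\in R}(0,1]\right) \sqcup (0,1]$ (the extra copy of $(0,1]$ absorbing $V$, tagged by a symbol not in $E^0\cup E^1$), with the obvious product $\sigma$-algebra and the measure that restricts to $\mu_i$ on $X_i$ and Lebesgue measure on the extra intervals; set $R_e := R_e^i$ for $e$ in the $i$-th piece, $D_v := D_v^i$ for $v\in Z_{v_i}$, $D_v := $ the $(0,1]$-copy tagged by $v$ for $v\in R$, and $f_e := f_e^i$. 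One verifies conditions (1)--(6) of Definition~\ref{ESR} for $E$ from those for each $E^{v_i}$: disjointness across different pieces is automatic from the disjoint union, and conditions (3),(4),(5),(6) are local to a single piece since every edge and its endpoints lie in one $Z_{v_i}$; condition (4) for an isolated vertex $v\in R$ is vacuous as $s^{-1}(v)=\varnothing$. Thus $(X,\mathcal{M},\mu)$ is an $E$-branching system, and Theorem~\ref{rep} gives a representation $\pi: C^*(E)\to B(L^2(X,\mu))$. Since $L^2(X,\mu) = \left(\bigoplus_i L^2(X_i,\mu_i)\right)\oplus\left(\bigoplus_{v\in R}L^2((0,1])\right)\oplus L^2((0,1])$ and $\pi$ respects this decomposition block-wise — acting as $\pi^{v_i}$ on the $i$-th block, as multiplication by $\chi_{D_v}$ on the $v$-block for $v\in R$, and trivially on the last block — the unitary $U := \left(\bigoplus_i U_i\right)\oplus\mathrm{Id}_R\oplus\mathrm{Id}_V$ satisfies $U^*\pi(x)U = \varphi(x)$ for all $x\in C^*(E)$.

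The main obstacle I anticipate is purely bookkeeping: making the index set $I$ explicit (it is $\Delta$) and checking carefully that $\varphi$ really does restrict to a \emph{representation of $C^*(E^{v_i})$} — i.e. that the Cuntz--Krein relations (CK1)--(CK3) for $E^{v_i}$ are inherited, which hinges on the fact that $s^{-1}_{E^{v_i}}(v) = s^{-1}_E(v)$ for every $v\in Z_{v_i}$ (no edge leaves the class), so the finitary sum relation (CK3) transfers verbatim. Once that is in place, the gluing of branching systems and unitaries is routine. One should also state at the outset (or cite the earlier Proposition) that $\varphi|_V = 0$ so the leftover summand $V$ causes no trouble, being handled by the spare copy of $(0,1]$ on which $\pi$ acts as zero.
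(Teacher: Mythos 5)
Your overall strategy --- decompose $E$ into the connected pieces $E^{v_i}$ and the isolated vertices, restrict $\varphi$ to the reducing subspaces $H^{v_i}:=\bigoplus_{v\in Z_{v_i}}H_v$, apply Corollary~\ref{everybpb} piecewise and reassemble --- is exactly the route the paper takes (its proof is literally ``apply the previous result to each subgraph $E^{v_i}$''), and your block decomposition is sound: no edge joins distinct classes, $s^{-1}_{E^{v_i}}(v)=s^{-1}_E(v)$ for $v\in Z_{v_i}$, so the restricted operators satisfy (CK1)--(CK3) for $E^{v_i}$, and $\varphi$ vanishes on $V$. The genuine gap is in your gluing step. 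You model each isolated vertex $v\in R$, and the leftover summand $V$, by a copy of $(0,1]$ with Lebesgue measure. Then $\pi(P_v)$ is the orthogonal projection onto $L^2(D_v)\cong L^2((0,1])$, a separable infinite-dimensional space, while $\varphi(P_v)$ is the projection onto $H_v$, whose Hilbert dimension is arbitrary (possibly finite, zero, or uncountable). Any unitary $U$ with $U^*\pi(P_v)U=\varphi(P_v)$ must carry $H_v$ onto $L^2(D_v)$, so it cannot exist unless $\dim H_v=\aleph_0$; the same objection applies to $V$ versus the spare copy of $(0,1]$, and in any case ``$\mathrm{Id}_R\oplus\mathrm{Id}_V$'' is not even a map between the spaces you wrote down, since the domain and codomain blocks differ.

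The repair is straightforward and is in effect what the paper's machinery already provides: use counting measures. Either replace your copies of $(0,1]$ by index sets of the correct cardinalities (total orthonormal sets of $H_v$ for $v\in R$, and of $V$, tagged so as to be disjoint from everything else) equipped with counting measure --- all conditions of Definition~\ref{ESR} still hold, $\Phi_{f_e}=1$, and now the dimensions match, so a genuine unitary exists --- or, cleaner and closer to the intended reading, avoid gluing branching systems altogether: apply Theorem~\ref{gbpb} to each subgraph $E^{v_i}$ (its conclusion is precisely the list of conditions in the definition of a permutative representation, restricted to that piece), choose arbitrary total orthonormal sets $B_v\subseteq H_v$ for the isolated vertices (nothing constrains them, since $s^{-1}(v)=r^{-1}(v)=\emptyset$), conclude that $\varphi$ itself is permutative, and then invoke Theorem~\ref{bpb} once for the whole of $H$. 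That theorem already absorbs $V$ into its index set $\Lambda$ with counting measure, so the dimension bookkeeping your construction misses is handled there automatically.
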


\begin{proof}
It follows by applying the previous result to each subgraph $E^{v_i}$.
\end{proof}

\begin{exem}
By the previous corollary, each representation of the graph below is unitary equivalent to a representation induced by a branching system.

\centering
        \begin{tikzpicture}[->,auto,node distance=1.3 cm,
                    thick]
\tikzset{every state/.style={minimum size=0pt}} 

\node[state,inner sep=0.5pt,draw=none] (A){$v_1$};
\node[state,inner sep=0.5pt,draw=none] (B)[below of =A] {$v_3 $};
\node[state,inner sep=0.5pt,draw=none] (C)[left of =B] {$v_2$};
\node[state,inner sep=0.5pt,draw=none] (D)[right of =B] {$v_{4}$};
\node[state,inner sep=0.5pt,draw=none] (E)[right of =D] {$v_5$};
\node[state,inner sep=0.5pt,draw=none] (F)[right of =E]{$v_6$};
\node[state,inner sep=0.5pt,draw=none] (G)[ above of =F]{$v_7$};
\node[state,inner sep=0.5pt,draw=none] (H)[right of =F] {$v_8$};
\node[state,inner sep=0.5pt,draw=none] (I)[right of =G]{$v_9$};

\path 
(A) edge  node{$e_1$} (C)
(A) edge  node{$e_2$} (B)
(D) edge  node[right]{$e_3$} (A)
(E) edge  node{$e_4$}  (F)
(F) edge node{$e_5$} (H)
(G) edge node{$e_6$} (I);

\end{tikzpicture}
    
\end{exem}

The converse of the previous corollaries are note true. There are graphs such that every representation of their C*-algebras are unitarily equivalent to representations induced by a branching system but the graphs do not satisfy the hypothesis of the previous corolaries. We will show two examples.
		
\begin{exem}[2.2, \cite{B}]
		In \cite{B}, has been shown that every representation of the graph
		
\begin{center}
\begin{tikzpicture}[->,auto,node distance=2.5cm,
                    thick]
\tikzset{every state/.style={minimum size=0pt}} 
\tikzset{every loop/.style={min distance=10mm,in=0,out=80,looseness=20}} 

\node[state,inner sep=0.5pt,draw=none] (A)   {$v_{-1}$};
\node[state,inner sep=0.5pt,draw=none] (B) [right of =A]   {$v_0$};
\node[state,inner sep=0.5pt,draw=none] (C) [right of =B]   {$v_1$};
\node[state,inner sep=0.5pt,draw=none] (D) [right of =C]   {$...$};
\node[state,inner sep=0.5pt,draw=none] (F) [left of = A]   {$...$};

\path (A) edge  node{$e_0$} (B)
(B) edge  node{$e_1$} (C)
(C) edge  node{$e_2$} (D)
(F) edge node{$e_{-1}$} (A);

\end{tikzpicture}
\end{center}
is permutative. Then, by Theorem \ref{bpb} every representation of this graph is unitarily equivalent to a representation arising from a branching system. However, notice that there are no extreme edges and extreme vertices, so that no $\mathcal{E}_n$ does exist.\end{exem}

\begin{exem}\label{kk}
Let $E$ be the following graph
\begin{center}
\begin{tikzpicture}[->,auto,node distance=1.5cm,thick]
\tikzset{every state/.style={minimum size=0pt}} 
\tikzset{every loop/.style={min distance=10mm,looseness=12}} 

\node[state,inner sep=0.5pt,draw=none] (A)   {$v_5$};
\node[state,inner sep=0.5pt,draw=none] (B) [above left of =A]   {$v_{4}$};
\node[state,inner sep=0.5pt,draw=none] (C) [below left of =A]   {$v_{6}$};
\node[state,inner sep=0.5pt,draw=none] (D) [left of =B]   {$v_3$};
\node[state,inner sep=0.5pt,draw=none] (E) [left of =C]   {$v_7$};
\node[state,inner sep=0.5pt,draw=none] (F) [left of =D]   {$v_{2}$};
\node[state,inner sep=0.5pt,draw=none] (G) [left of =E]   {$v_{8}$};
\node[state,inner sep=0.5pt,draw=none] (H) [above left of =G]   {$v_1$};
 
\path (B) edge[in=90, out=0] node[right]{$e_4$}(A)
      (C) edge[in=270,out=0] node[right]{$e_5$} (A)
      (F) edge[in=90, out=180] node[left]{$e_1$}(H)
      (G) edge[in=270, out=180] node[left]{$e_8$}(H)
      (F) edge node{$e_2$}(D)
      (B) edge node[above]{$e_3$}(D)
      (E) edge node[below]{$e_6$}(C)
      (E) edge node{$e_7$}(G);
\end{tikzpicture}
\end{center}
and let $\varphi: C^*(E) \to B(H)$ an arbitrary representation. First choose arbitrary orthonormal total sets for the vertices $v$  such that $\#r^{-1}(v)\geq 2$, that is, the vertices $v_1,v_3$ and $v_5$. For this vertices choose  $B_{v_1} \subseteq H_{v_1}$, $B_{v_3} \subseteq H_{v_3}$, $B_{v_5} \subseteq H_{v_5}$. Now define $B_{e_i} := \varphi(S_{e_i})(B_{r(e_i)})$ for each $i = 1,2,3,4,5,8$. After this, choose $B_{v_2} := B_{e_1} \cup B_{e_2}$, $B_{v_4} := B_{e_3} \cup B_{e_4}$, $B_{v_8} := B_{e_8}$ and $B_{v_6} := B_{e_5}$; finally define $B_{e_7} := \pi(S_{e_7})(B_{v_8})$ and $B_{e_6} := \pi(S_{e_6})(B_{v_6})$. Finally define $B_{v_7} = B_{e_7} \cup B_{e_6}$. With this choices is clear that $\varphi$ is permutative. Of course this graphs doesn't satisfies the hypothesis of corollaries because $E$ isn't $P$-simple.
\end{exem}

\begin{obs}
The previous example may be generalized in a natural way, with analogous arguments, for every  graph $E$ which is a cycle with the property that $\{v \in E^0 \,\ | \,\ \#r^{-1}(v) = 2 \}\neq \emptyset$ for some vertex $v$. The hypothesis $\{v \in E^0 \,\ | \,\ \#r^{-1}(v) = 2 \}\neq\emptyset$ is important. For example, in Example \ref{exemploloop} the graph is a cycle, with $\{v \in E^0 \,\ | \,\ \#r^{-1}(v) = 2 \}=\emptyset$ for each vertex, and in this example there is shown a non permutative representation.
\end{obs}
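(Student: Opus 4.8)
Since $E$ is a cycle it is a finite connected graph in which every vertex is adjacent to exactly two edges, so $\#r^{-1}(v)+\#s^{-1}(v)=2$ and hence $\#r^{-1}(v)\in\{0,1,2\}$ for every $v\in E^0$. Call $v$ a \emph{peak} if $\#r^{-1}(v)=2$ (equivalently, a sink), a \emph{valley} if $\#r^{-1}(v)=0$ (equivalently $\#s^{-1}(v)=2$), and a \emph{slope} otherwise (one edge in, one edge out). Writing the cycle as $w_0,\dots,w_{n-1},w_n=w_0$ with edge $g_i$ joining $w_{i-1}$ and $w_i$, and setting $\sigma_i=+1$ if $r(g_i)=w_i$ and $\sigma_i=-1$ otherwise, one checks that $w_i$ is a peak iff $\sigma_i=+1,\sigma_{i+1}=-1$, a valley iff $\sigma_i=-1,\sigma_{i+1}=+1$, and a slope iff $\sigma_i=\sigma_{i+1}$; consequently peaks and valleys alternate around the cycle, their numbers coincide, and the hypothesis $\{v:\#r^{-1}(v)=2\}\neq\varnothing$ forces a valley to exist as well. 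Cutting the cycle at its peaks then decomposes it into finitely many \emph{arcs}, each running from a peak down through some (possibly zero) slopes to a single valley and back up through some slopes to a peak, with every edge of the arc oriented away from that valley; when there is a single peak there is one arc beginning and ending at it. The plan is to imitate Example~\ref{kk}: choose orthonormal bases freely at the peaks and let the slope vertices force a unique propagation of those choices along each arc, the two halves meeting at the valley.

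Concretely, given a representation $\varphi:C^*(E)\to B(H)$, I would first pick for each peak $P$ an arbitrary total orthonormal set $B_P\subseteq H_P$. I then process each arc independently: for the edge $g$ of the arc adjacent to a bounding peak $P$ (so $r(g)=P$) set $B_g:=\varphi(S_g)(B_P)$, which is total orthonormal in $H_g$ because $\varphi(S_g):H_{r(g)}\to H_g$ is unitary. At the next vertex $u$ toward the valley one has $s^{-1}(u)=\{g\}$, so the permutative constraint forces $B_u:=B_g$; then $r^{-1}(u)=\{g'\}$ and we set $B_{g'}:=\varphi(S_{g'})(B_u)$, continuing in this way down the arc. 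Doing this from both bounding peaks produces total orthonormal sets on all slopes of the arc and on the two edges $g^-,g^+$ of $s^{-1}(V)$ for the arc's valley $V$; finally put $B_V:=B_{g^-}\cup B_{g^+}$. Since $\varphi(S_{g^-})\varphi(S_{g^-})^*$ and $\varphi(S_{g^+})\varphi(S_{g^+})^*$ are orthogonal projections and $\#s^{-1}(V)=2<\infty$ gives $H_V=H_{g^-}\oplus H_{g^+}$, the proposition on unions of total orthonormal sets shows $B_V$ is total orthonormal in $H_V$.

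It then remains to verify that the assignment is well defined and meets the three requirements in the definition of a permutative representation. The arcs partition $E^1$ and partition the non-peak vertices, so each edge and each slope/valley vertex is assigned exactly once; a peak $P$ keeps only its once-chosen seed $B_P$, and its two incoming edges lie in the two distinct arcs meeting at $P$, each assigned once as $\varphi(S_g)(B_P)$. For the three conditions: if $e\in s^{-1}(v)$ with $v$ a slope then $B_e=B_v$ by construction, if $v$ is a valley then $B_e\subseteq B_{g^-}\cup B_{g^+}=B_v$, and peaks have $s^{-1}(v)=\varnothing$; the finite-sum condition holds at slopes (singletons) and valleys ($B_V=B_{g^-}\cup B_{g^+}$) and there are no vertices of infinite out-degree; and $\varphi(S_e)(B_{r(e)})=B_e$ holds for every $e$ because $r(e)$ is always a peak or a slope and in each case $B_e$ was defined precisely as $\varphi(S_e)(B_{r(e)})$. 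Hence $\varphi$ is permutative, and Theorem~\ref{bpb} then delivers the asserted unitary equivalence with a branching-system representation. I expect the only real work to be exactly this bookkeeping — checking that the arc decomposition is exhaustive and conflict-free and that the slope vertices genuinely force the propagation — after which every verification reduces to the computations already carried out in Example~\ref{kk}; that the hypothesis cannot be dropped is witnessed by the loop of Example~\ref{exemploloop}, where $\#r^{-1}(v)=1$ and a non-permutative representation was exhibited.
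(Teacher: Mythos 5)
Your proposal is correct and follows essentially the same route as the paper, which offers no formal proof beyond the phrase ``analogous arguments'' pointing back to Example \ref{kk}: there, bases are chosen freely at the vertices with $\#r^{-1}(v)=2$, propagated along edges via $\varphi(S_e)$ with singleton out-degree forcing the choice at intermediate vertices, and merged by unions at the vertices with two outgoing edges, exactly your peak/slope/valley scheme. Your sign-alternation argument and arc decomposition simply make the paper's implicit bookkeeping explicit, and your use of Example \ref{exemploloop} for the necessity of the hypothesis matches the paper.
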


\bibliographystyle{plain}
\bibliography{mybibliography}

\vspace{1.5pc}

Ben-Hur Eidt, Departamento de Matem\'{a}tica, Universidade Federal de Santa Catarina, Florian\'{o}polis, 88040-900, Brasil

Email: benhur96dt@gmail.com

\vspace{0.5pc}
Danilo Royer, Departamento de Matem\'{a}tica, Universidade Federal de Santa Catarina, Florian\'{o}polis, 88040-900, Brasil

Email: daniloroyer@gmail.com

\end{document}